 \def\LaTeX{\leavevmode L\raise.42ex
   \hbox{\kern-.3em\size{\sf@size}{0pt}\selectfont A}\kern-.15em\TeX}
\newcommand{\BibTeX}{{\rm B\kern-.05em{\sc
i\kern-.025emb}\kern-.08em\TeX}}
\newtheorem{thm}{Theorem}[section]
\newtheorem{lem}[thm]{Lemma}
\theoremstyle{definition}
\newtheorem{defn}{Definition}
\numberwithin{equation}{section}
\begin{document}

\title{An approach to spectral problems on Riemannian
manifolds}

\author{Isaac Pesenson}\footnote{Pacific J. Math. 215 (2004), no. 1, 183-199}
\address{Department of Mathematics, Temple University,
Philadelphia, PA 19122} \email{pesenson@math.temple.edu}

\keywords{ Riemannian manifold, Laplace-Beltrami operator,
 Rayleigh-Ritz method, Poincare inequality,
polyharmonic spline.} \subjclass{ 42C05; Secondary 41A17, 41A65,
43A85, 46C99 }

 \begin{abstract}
 It is shown that eigenvalues of Laplace-Beltrami operators
on  compact Riemannian manifolds can be determined as limits of
eigenvalues of certain finite-dimensional operators in spaces of
polyharmonic functions with singularities. In particular, a
bounded set of eigenvalues can be determined using a space of such
polyharmonic functions with a fixed set of singularities. It also
shown that corresponding eigenfunctions can be reconstructed as
uniform limits of the same polyharmonic functions with appropriate
fixed set of singularities.
\end{abstract}

\maketitle

\section{Introduction and main results}

Given an appropriate set $x_{1}, x_{2}, ..., x_{N}$ of  points "uniformly"
distributed over a compact
manifold $M, dim M=d,$ and a natural  $k>d/2$, we construct a
$N$-dimensional
subspace of polyharmonic
functions $S^{k}(x_{1},x_{2},...,x_{N})$ with singularities at $x_{1},
x_{2},
..., x_{N}$. In other words, the space  $S^{k}(x_{1},x_{2},...,x_{N})$
is the set of solutions of the following equation

$$
\Delta ^{2k}u=\sum_{\gamma=1}^{N}\alpha _{\gamma }\delta
(x_{\gamma}), k>d/2,
$$
 where $\Delta$ is the Laplace-Beltrami operator of $M$,
$\delta (x_{\gamma})$ is the Dirac measure at the point
$x_{\gamma }$ and
$$
\alpha_{1}+\alpha_{2}+...+\alpha_{N}=0.
$$

The main result (Theorem 1.3) shows that eigenvalues of the
Laplace-Beltrami operator on $M$ can be determined in two
different ways:

1) eigenvalues on an interval $[0, \omega], \omega>0,$ can be
determined as
 limits of eigenvalues of
some finite-dimensional operators in
$S^{k}(x_{1},x_{2},...,x_{N})$ when smoothness $k$ goes to
infinity, but the set of points $x_{1}, x_{2}, ..., x_{N}$ is
fixed;

2) all eigenvalues can be determined as limits of eigenvalues of
the same finite-dimensional operators in
$S^{k}(x_{1},x_{2},...,x_{N})$ when $k$ is fixed, but the number
of points $x_{1},x_{2},...,x_{N}$ is increasing.

Technically this result is a specific realization of the Rayleigh-Ritz
method [2],[3].

The above result is based on the fact (Theorem 1.4) that
eigenfunctions of $\Delta$ can be reconstructed from their values
on appropriate sets $x_{1},x_{2},...,x_{N}$ as uniform limits of
polyharmonic functions with singularities at
$x_{1},x_{2},...,x_{N}$. A weaker result in terms of homogeneous
manifolds and $L_{2}(M)$-convergence is contained in [6].

Let $\Delta$ be the Laplace-Beltrami operator on a compact,
orientable  Riemannian manifold  $M, \dim M=d,$ with metric tensor
$g$. It is known that $\Delta$ is a self-adjoint positive definite
operator in the corresponding space $L_{2}(M)$ constructed  from
$g$. Domains of the powers
 $\Delta^{s/2}, s\in \mathbb{R},$ coincide with the Sobolev spaces
$H^{s}(M), s\in \mathbb{R}$. To choose norms on spaces $H^{s}(M),$
we consider a finite cover of $M$ by balls
$B(y_{\nu},\sigma)$ where $y_{\nu}\in M$ is the center  of the ball and
$\sigma$ is its radius. For a partition of unity
${\varphi_{\nu}}$ subordinate to the family
$\{B(y_{\nu},\sigma)\}$ we introduce Sobolev space $H^{s}(M)$ as the
completion of
$C_{0}^{\infty}(M)$ with respect to the norm
\begin{equation}
\|f\|_{H^{s}(M)}=\left(\sum_{\nu}\|\varphi_{\nu}f\|^{2}
_{H^{s}(B(y_{\nu},\sigma))}\right)
^{1/2}.
\end{equation}
The regularity Theorem for the Laplace-Beltrami operator $\Delta$
states that the norm (1.1) is equivalent to the graph norm
$\|f\|+\|\Delta^{s/2}f\|$.

 We  assume that the
Ricci curvature  $Ric$ satisfies (as a form) the inequality
\begin{equation}
Ric\geq-kg, k\geq 0.
\end{equation}

 The volume of the ball $B(x,\rho)$ will be denoted
by $|B(x,\rho)|.$
Our assumptions about
 the manifold imply that there exists a constant $b>0$ such that
\begin{equation}
b^{-1}\leq\frac{|B(x,\rho)|}{|B(y,\rho)|}
\leq b, x,y\in M, \rho<r,
\end{equation}
 where $r$ is the injectivity radius
 of the manifold. The Bishop-Gromov comparison Theorem (see[8])
implies that for any $0<\sigma<\lambda<r/2$ the following
inequality holds true
$$
|B(x,\lambda)|\leq\left(\lambda/\sigma\right)^{d}e^{(k r(d-1))^{1/2}}
|B(x,\sigma)|.
$$

In what follows we use the notation

$$R_{0}(M)=12^{d}be^{(k r(d-1))^{1/2}},$$
where $d$ is the dimension of the manifold, $r$ is the injectivity radius
 and constants $k, b$ are
from (1.2) and (1.3) respectively.

The following Covering Lemma plays an important role for the
paper.

\begin{lem}
If $M$ satisfy the above assumptions then for any $0<\rho<r/6$
there
 exists a finite set of points $\{x_{i}\}$ such that

1) balls $B(x_{i}, \rho/4)$ are disjoint,

2) balls $B(x_{i}, \rho/2)$ form a cover of $M$,

3) multiplicity of the cover by balls $B(x_{i},\rho)$ is not greater
$R_{0}(M).$

\end{lem}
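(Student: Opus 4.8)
The plan is to construct the point set $\{x_i\}$ greedily via a maximal packing argument, and then to bound the covering multiplicity using the volume doubling estimate (1.3) together with the Bishop–Gromov comparison inequality. First I would fix $0<\rho<r/6$ and consider the collection of all subsets $\{x_i\}\subset M$ with the property that the balls $B(x_i,\rho/4)$ are pairwise disjoint. Since $M$ is compact, such a collection admits a maximal element (either by Zorn's lemma or, more concretely, because any such family is finite with cardinality bounded by the total volume of $M$ divided by the minimal volume of a ball of radius $\rho/4$, which is positive by (1.3)). Let $\{x_i\}$ be such a maximal family; this immediately gives property (1).

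For property (2), I would argue by maximality: if some point $y\in M$ were not covered by any ball $B(x_i,\rho/2)$, then $\operatorname{dist}(y,x_i)\geq\rho/2$ for all $i$, hence the ball $B(y,\rho/4)$ would be disjoint from every $B(x_i,\rho/4)$ (by the triangle inequality, since $\rho/4+\rho/4=\rho/2$), contradicting maximality. Therefore the balls $B(x_i,\rho/2)$ cover $M$.

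For property (3), the multiplicity bound, I would fix a point $x\in M$ and estimate the number of indices $i$ with $x\in B(x_i,\rho)$. For each such $i$ one has $\operatorname{dist}(x,x_i)<\rho$, so $B(x_i,\rho/4)\subset B(x,2\rho)$; since these balls are disjoint, summing volumes gives $\sum_i |B(x_i,\rho/4)|\leq |B(x,2\rho)|$, where the sum is over the relevant indices. Now I need a lower bound on each $|B(x_i,\rho/4)|$ in terms of $|B(x,2\rho)|$. Using (1.3) to pass from $x_i$ to $x$ (valid since $\rho/4<r$ and all radii involved are below $r/2$ as $\rho<r/6$), and then the Bishop–Gromov inequality with $\sigma=\rho/4$ and $\lambda=2\rho$ (noting $2\rho<r/3<r/2$), one gets $|B(x,2\rho)|\leq 8^d e^{(kr(d-1))^{1/2}}|B(x,\rho/4)|\leq 8^d b\, e^{(kr(d-1))^{1/2}}|B(x_i,\rho/4)|$. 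Hence the number of such indices is at most $8^d b\, e^{(kr(d-1))^{1/2}}$, which is comfortably below $R_0(M)=12^d b\, e^{(kr(d-1))^{1/2}}$.

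The main obstacle here is purely bookkeeping: getting every radius that appears (namely $\rho/4$, $\rho/2$, $\rho$, and $2\rho$) to lie in the range where (1.3) and Bishop–Gromov apply, which is exactly why the hypothesis is stated as $\rho<r/6$ rather than something cruder; and chaining the two comparison estimates so that the constant comes out below $R_0(M)$ with room to spare. No step is deep, but the constants must be tracked carefully. I expect the write-up to consist essentially of the three bullet-point arguments above, with the volume comparison in step (3) being the one that requires the stated normalization $R_0(M)=12^d b\, e^{(kr(d-1))^{1/2}}$.
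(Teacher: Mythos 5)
Your proposal is correct and follows essentially the same route as the paper: a maximal $\rho/4$-packing gives properties (1) and (2) by the triangle inequality, and the multiplicity bound comes from disjointness plus the Bishop--Gromov inequality chained with (1.3). The only (harmless) difference is that you enclose the disjoint balls in $B(x,2\rho)$ centered at the test point, getting the constant $8^{d}b\,e^{(kr(d-1))^{1/2}}$, whereas the paper encloses them in $B(x_j,3\rho)$ centered at one of the $x_j$'s, arriving exactly at $R_{0}(M)=12^{d}b\,e^{(kr(d-1))^{1/2}}$; both satisfy the stated bound.
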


We will need the following definition.
\begin{defn}
For a given $0<\rho<r/6$ we say that a finite set of points $
M_{\rho}=\{x_{i}\}$ is $\rho$-admissible if it satisfies
properties 1)- 3) from the Lemma 1.1.
\end{defn}

Given a $\rho$-admissible set $M_{\rho}, |M_{\rho}|=N,$ and a
sequence of complex numbers $\{v_{i}\}_{1}^{N},$ we consider the following
variational problem:

Find  a function $f\in H^{2k}(M), k\in\mathbb{N}, k>d-1,$ such that

1) $f(x_{i})=v_{i}, i=1,...,N,$

2) $f$ is a minimizer of the functional $u\rightarrow\|\Delta^{k}f\|$.

We show that this problem does have a unique solution.

For a $\rho$-admissible set $M_{\rho}$ and a function
$f\in H^{k}(M),$ $k$ is large enough, the solution of the above
variational
 problem that
interpolates $f$ on the set $M_{\rho}$ will be denoted by $s_{k}(f).$
In fact, the function $s_{k}(f)$ depends on the set $M_{\rho}$, but we
 hope our notation will not cause any confusion. The following
Lemma  implies in particular that the set of minimizers is linear.
\begin{lem}
The set of solutions of the variational problem is the same as the set
$S^{k}(M_{\rho})$ of all solutions of the equation
\begin{equation}
\Delta ^{2k}u=\sum_{x_{\gamma }\in M_{\rho}}\alpha _{\gamma }\delta
(x_{\gamma}), k>d/2,
\end{equation}
 where $\delta (x_{\gamma})$ is the Dirac measure at the point
$x_{\gamma }$ and
\begin{equation}
\alpha_{1}+\alpha_{2}+...+\alpha_{N}=0, |M_{\rho}|=N.
\end{equation}
\end{lem}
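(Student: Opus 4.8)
The plan is to recast the interpolation problem as an orthogonal projection in a Hilbert space, to read off its Euler--Lagrange equation, and to recognise that equation as the defining system (1.4)--(1.5) of $S^{k}(M_{\rho})$; the reverse inclusion will then follow because a stationary point of a convex quadratic functional is automatically its unique minimizer. Concretely, I would fix a data vector $(v_{1},\dots ,v_{N})\in\mathbb{C}^{N}$ and put $V=\{f\in H^{2k}(M): f(x_{\gamma})=0,\ \gamma =1,\dots ,N\}$. Since $k>d-1$ gives $2k>d/2$, the Sobolev embedding makes each evaluation $f\mapsto f(x_{\gamma})$ continuous on $H^{2k}(M)$, so $V$ is a closed subspace containing no nonzero constant, and the feasible set $\{f: f(x_{\gamma})=v_{\gamma}\}$ is a nonempty closed affine subspace $f_{0}+V$, with $f_{0}=\sum_{\gamma}v_{\gamma}\psi_{\gamma}$ for smooth bumps $\psi_{\gamma}$ satisfying $\psi_{\gamma}(x_{\mu})=\delta_{\gamma\mu}$.

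The one genuinely analytic step, which I expect to be the main obstacle, is to show that $f\mapsto\|\Delta^{k}f\|$ is a norm on $V$ equivalent to $\|\cdot\|_{H^{2k}(M)}$. By the regularity theorem quoted above it suffices to prove a Poincar\'e-type inequality $\|f\|\le C\|\Delta^{k}f\|$ for $f\in V$. If this failed there would be $f_{n}\in V$ with $\|f_{n}\|=1$ and $\|\Delta^{k}f_{n}\|\to 0$; then $(f_{n})$ is bounded in $H^{2k}(M)$, so by the Rellich compactness theorem a subsequence converges in $L_{2}(M)$ and, since $2k>d/2$, in $C(M)$ to some $f$ with $\|f\|=1$, while $\Delta^{k}f=0$ forces $f$ to be constant and $f(x_{\gamma})=0$ then forces $f=0$ -- a contradiction. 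Thus $(V,\langle\Delta^{k}\cdot,\Delta^{k}\cdot\rangle)$ is a Hilbert space. Writing $\|\Delta^{k}(f_{0}+g)\|^{2}=\|\Delta^{k}f_{0}\|^{2}+2\,\mathrm{Re}\,\langle\Delta^{k}f_{0},\Delta^{k}g\rangle+\|\Delta^{k}g\|^{2}$ for $g\in V$ and representing the middle term by the Riesz theorem as $2\,\mathrm{Re}\,\langle\Delta^{k}h,\Delta^{k}g\rangle$ with a unique $h\in V$, the functional becomes $\|\Delta^{k}f_{0}\|^{2}-\|\Delta^{k}h\|^{2}+\|\Delta^{k}(g+h)\|^{2}$ and is minimized precisely at $g=-h$. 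Hence the variational problem has a unique solution $s_{k}(f)=f_{0}-h$.

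Next I would extract the Euler--Lagrange equation: $u\in f_{0}+V$ is the minimizer if and only if $\langle\Delta^{k}u,\Delta^{k}\phi\rangle=0$ for all $\phi\in V$ (differentiate $\|\Delta^{k}(u+t\phi)\|^{2}$ at $t=0$ along real and imaginary variations). The functional $T:=\Delta^{2k}u$, defined on $C^{\infty}(M)$ by $\langle T,\phi\rangle=\langle\Delta^{k}u,\Delta^{k}\phi\rangle$, then satisfies $|\langle T,\phi\rangle|\le C\|u\|_{H^{2k}(M)}\|\phi\|_{H^{2k}(M)}$, so $T\in H^{-2k}(M)$ and $T$ annihilates $V$. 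Since $V$ is the kernel of the bounded surjection $E:H^{2k}(M)\to\mathbb{C}^{N}$, $E(f)=(f(x_{1}),\dots ,f(x_{N}))$, whose adjoint sends the $\gamma$-th coordinate functional to $\delta(x_{\gamma})$, the annihilator of $V$ in $H^{-2k}(M)$ is exactly $\mathrm{span}\{\delta(x_{1}),\dots ,\delta(x_{N})\}$; thus $\Delta^{2k}u=\sum_{\gamma}\alpha_{\gamma}\delta(x_{\gamma})$ for some $\alpha_{\gamma}$. Pairing with the constant $1$ and using $\Delta 1=0$ gives $\sum_{\gamma}\alpha_{\gamma}=\langle T,1\rangle=\langle\Delta^{k}u,\Delta^{k}1\rangle=0$, so $u\in S^{k}(M_{\rho})$.

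Finally, for the converse, let $u\in S^{k}(M_{\rho})$. Since the right-hand side of (1.4) lies in $H^{-s}(M)$ for every $s>d/2$, elliptic regularity puts $u\in H^{2k}(M)$, so $v_{\gamma}:=u(x_{\gamma})$ is well defined and $u$ is feasible for this data. For any $\phi\in V$, the defining equation together with density of $C^{\infty}(M)$ in $H^{2k}(M)$ gives $\langle\Delta^{k}u,\Delta^{k}\phi\rangle=\sum_{\gamma}\alpha_{\gamma}\phi(x_{\gamma})=0$, whence $\|\Delta^{k}(u+\phi)\|^{2}=\|\Delta^{k}u\|^{2}+\|\Delta^{k}\phi\|^{2}\ge\|\Delta^{k}u\|^{2}$; so $u$ minimizes over its feasible set, i.e.\ $u=s_{k}(f)$ for the data $(v_{\gamma})$. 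Therefore the collection of all solutions of the variational problem, as the data ranges over $\mathbb{C}^{N}$, coincides with $S^{k}(M_{\rho})$, and the latter is a linear space because the conditions (1.4)--(1.5) are linear in $u$.
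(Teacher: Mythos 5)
Your argument is correct, and its skeleton matches the paper's: existence and uniqueness of the minimizer via Hilbert-space projection onto the constraint subspace, the Euler--Lagrange orthogonality $\langle\Delta^{k}u,\Delta^{k}\phi\rangle=0$ on $V$, identification of $\Delta^{2k}u$ with a combination of Dirac measures, and the converse via elliptic regularity plus the same orthogonality (this is exactly how the paper argues in Section 4, Lemma 4.5 and the proof of Lemma 1.2). Where you genuinely differ is in the two supporting steps. First, for the norm equivalence on $V$ you use a soft Rellich-compactness contradiction argument; the paper instead proves a quantitative Poincar\'e-type inequality (Lemma 4.1, then Lemma 4.4) with explicit constants of the form $C(M,k)\rho^{2k}$, obtained from Taylor expansion and local Sobolev estimates. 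For Lemma 1.2 your qualitative version suffices, but your constant depends on the particular point set, whereas the paper's uniform dependence on $\rho$ is what later drives the Approximation Theorem 1.4 and Theorem 1.3, so the heavier machinery is not avoidable in the paper as a whole. Second, to see that the functional $\Delta^{2k}u$ is supported on $M_{\rho}$ you invoke the annihilator of $\ker E$ for the evaluation map $E:H^{2k}(M)\to\mathbb{C}^{N}$; the paper does the same thing concretely, writing $\psi-\sum_{\nu}\psi(x_{\nu})\xi_{\nu}\in V$ for bump functions $\xi_{\nu}$ and thereby also producing the explicit coefficients $\alpha_{\nu}=\langle\Delta^{k}u,\Delta^{k}\xi_{\nu}\rangle$. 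A small bonus of your write-up is that you verify $\sum_{\gamma}\alpha_{\gamma}=0$ by pairing with the constant function, a point the paper leaves to the remark that (1.4) is solvable only under (1.5). Two minor polish items: in the compactness step you should note that $\Delta^{k}f=0$ follows from $L_{2}$-convergence by testing against $\Delta^{k}\psi$, $\psi\in C^{\infty}(M)$ (or by weak $H^{2k}$-convergence), and that the kernel of $\Delta^{k}$ consists of constants because $M$ is connected.
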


Elements of the set $S^{k}(M_{\rho})$ will be called polyharmonic
splines. Since zero is the simple eigenvalue of the scalar
Laplace-Beltrami operator on a compact connected manifold  and
because equation (1.4) on a compact
 manifold is solvable only under assumption (1.5) the
dimension of the space  $S^{k}(M_{\rho})$ is exactly $|M_{\rho}|=N.$

It was shown in [6] that there are functions $L_{\nu}^{k}\in S^{k}
(M_{\rho})$ such that $L_{\nu}^{k}$ takes value $1$ at $x_{\nu}$
and $0$ at all other points of $M_{\rho}.$ Moreover these
functions form a  basis of $S^{k}(M_{\rho}).$

Let $0< \lambda_{1}\leq \lambda_{2}\leq ...\leq\lambda_{j}$ be the
 sequence of the first $j$ eigenvalues of the operator $\Delta$
in $L_{2}(M)$ counted with their
multiplicities and $\varphi_{1}, \varphi_{2},...,\varphi_{j}$
is the corresponding set of orthonormal
 eigen functions. Throughout the paper $\|.\|$ denotes the $L_{2}(M)$
norm.

 According to the min-max principle
for a self-adjoint positive definite operator $D$ in a Hilbert space
 $E$ the
$j$-th eigenvalue can be calculated by the formula
$$
\lambda_{j}=inf_{F\subset E} sup_{f\in F}
\frac{\|D^{1/2}f\|_{E}^{2}}{\|f\|_{E}^{2}},f\neq 0,
$$
where $inf$ is taken over all $j$-dimensional subspaces $F$ of
$ E$.

We introduce the numbers $\lambda_{j}^{(k)}(M_{\rho})$ by the formula
\begin{equation}
\lambda_{j}^{(k)}(M_{\rho})=inf_{F\subset S^{k}(M_{\rho})} sup_{f\in F}
\frac{\|\Delta^{1/2}f\|^{2}}{\|f\|^{2}}, f\neq 0,
\end{equation}
where $inf$ is taken over all $j$-dimensional subspaces of
$ S^{k}(M_{\rho})$.

As a consequence of the min-max
 principle we obtain that
the numbers $\lambda_{j}^{(k)}(M_{\rho})$ are the eigenvalues of
the matrix $D^{(k)}=D^{(k)}(M_{\rho})$ with entries
\begin{equation}
d_{\gamma,\nu}^{(k)}=\int_{M}(\Delta L^{k}_{\gamma})
L^{k}_{\nu}dx,
\end{equation}
where $dx$ is the Riemannian density.

Now we can formulate our main result which shows that eigenvalues
of matrices $D^{(k)}$
 approximate eigenvalues of the Laplace-Beltrami operator and
the rate of convergence is exponential.

\begin{thm}
There exists a $C_{0}=C_{0}(M)$ such that for any given $\omega>0$
if $0<\rho<\left(C_{0}\omega\right)^{-1/2}$ then for every
$\rho$-admissible set $M_{\rho}$, every eigenvalue
$\lambda_{j}\leq \omega$ and all $k=(2^{l}+1)d, l=0, 1, ...,$
\begin{equation}
\lambda_{j}^{(k)}(M_{\rho})- \omega^{2d}\gamma^{2(k-d)}\leq
\lambda_{j}\leq\lambda_{j}^{(k)}(M_{\rho}) ,
\end{equation}
where $\gamma=C_{0}\rho^{2}\omega<1.$
\end{thm}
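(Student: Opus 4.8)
The plan is to deduce Theorem~1.3 from the min-max principle together with the reconstruction result Theorem~1.4, whose key quantitative content is an estimate of the form $\|\varphi - s_k(\varphi)\| \le \big(C_0\rho^2\lambda\big)^{k-d}\|\varphi\|$ (or a close variant) for eigenfunctions $\varphi$ with eigenvalue $\lambda$, valid when $\rho$ is small relative to $\lambda$. The upper bound $\lambda_j \le \lambda_j^{(k)}(M_\rho)$ is the routine half: since $S^k(M_\rho)\subset H^{2k}(M)\subset H^1(M)$ is a finite-dimensional subspace of the form domain of $\Delta$, the min-max characterization over all $j$-dimensional subspaces of $L_2(M)$ infimizes over a larger collection than the infimum in $(1.7)$, so $\lambda_j \le \lambda_j^{(k)}(M_\rho)$ immediately. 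Everything real is in the lower bound for $\lambda_j^{(k)}(M_\rho)$, i.e.\ showing the Rayleigh quotients over $S^k(M_\rho)$ cannot be much smaller than the true eigenvalues.

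For the lower bound I would argue as follows. Fix $j$ with $\lambda_j \le \omega$ and let $E_j = \mathrm{span}\{\varphi_1,\dots,\varphi_j\}$. Consider the interpolation/approximation map $P_k : f \mapsto s_k(f)$ restricted to $E_j$, landing in $S^k(M_\rho)$. First I would show $P_k|_{E_j}$ is injective for $\rho$ small: if $s_k(f)=0$ for some $f\in E_j$ with $\|f\|=1$, then $f$ vanishes on $M_\rho$, but writing $f=\sum c_i\varphi_i$ one has good control on $f$ (it lies in every Sobolev space with norms bounded by powers of $\omega$), and a Poincar\'e-type inequality on the balls $B(x_i,\rho)$ of the $\rho$-admissible cover forces $\|f\|$ to be small when $f$ vanishes at the centers — contradiction once $\rho < (C_0\omega)^{-1/2}$. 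Hence $F := P_k(E_j)$ is a genuine $j$-dimensional subspace of $S^k(M_\rho)$, so $(1.7)$ gives $\lambda_j^{(k)}(M_\rho) \le \sup_{g\in F}\|\Delta^{1/2}g\|^2/\|g\|^2$; wait — that is the \emph{wrong} direction, so instead I would use $F$ directly in the min-max to bound $\lambda_j^{(k)}(M_\rho)$ from \emph{above} by a controlled quantity and combine with a perturbation argument. More precisely, the clean route: for each $g = s_k(f)\in F$ with $f=\sum_{i\le j} c_i\varphi_i$, estimate $\|\Delta^{1/2}g\|^2$ and $\|g\|^2$ in terms of $\|\Delta^{1/2}f\|^2 = \sum \lambda_i|c_i|^2$ and $\|f\|^2=\sum|c_i|^2$ using Theorem~1.4's bound $\|f - s_k(f)\| \le \gamma^{k-d}\omega^{\,d}\|f\|$ with $\gamma = C_0\rho^2\omega$, which yields $\|g\|^2 \ge (1 - \omega^{2d}\gamma^{2(k-d)})\|f\|^2$ up to the stated normalization and a comparable bound upstairs; the Rayleigh quotient of $g$ is then within an additive $\omega^{2d}\gamma^{2(k-d)}$ of that of $f$, which is $\le \lambda_j$. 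Taking the sup over $F$ and then the inf in $(1.7)$ delivers $\lambda_j^{(k)}(M_\rho) \le \lambda_j + \omega^{2d}\gamma^{2(k-d)}$, which rearranges to the left inequality in $(1.9)$.

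The restriction to exponents $k = (2^l+1)d$ should enter through the iteration scheme behind Theorem~1.4: the sharp approximation estimate is presumably obtained by a Bernstein/bootstrap argument that doubles the order of smoothness at each step (the Poincar\'e inequality is applied to $\Delta$-powers, so $\Delta^{2^l d}$-type quantities appear), and the admissible $k$ are exactly those reachable by this doubling starting from the base case $k=d$ needed for the Sobolev embedding $H^{2k}\hookrightarrow C(M)$ (pointwise evaluation) when $k>d/2$. I would therefore first record the precise form of the Poincar\'e-type inequality on a $\rho$-ball — something like $\|h\|_{L_2(B(x_i,\rho))} \le C\rho^2\|\Delta h\|_{L_2(B(x_i,2\rho))}$ for $h$ vanishing at $x_i$, with $C$ depending only on the geometric constants $b$, $k$, $d$, $r$ packaged into $R_0(M)$ and hence into $C_0$ — then iterate it across the finite cover (the multiplicity bound $R_0(M)$ controls the overlap losses) to pass from $L_2$ to $\Delta^k$-norms, obtaining the factor $(C_0\rho^2)^{k-d}$; the eigenfunction bounds $\|\Delta^m\varphi_j\| = \lambda_j^m \le \omega^m$ convert the $\Delta^k$-norms into the powers of $\omega$ appearing in $(1.9)$.

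The main obstacle I anticipate is the bookkeeping in the lower-bound argument: one must simultaneously control $\|g\|$ from below (so $g\neq 0$ and the Rayleigh quotient is well-defined with a good denominator) and $\|\Delta^{1/2}g\|$ from above, using only the single $L_2$-approximation estimate of Theorem~1.4 together with the fact that $g - f$ is itself a polyharmonic spline — one cannot assume a gradient/$H^1$ approximation estimate for free, so the energy $\|\Delta^{1/2}(g-f)\|$ must be handled via an interpolation inequality between $\|g-f\|$ and $\|\Delta^k(g-f)\|$, and the latter must be shown to be no larger than $\|\Delta^k f\| = \|\Delta^k(\sum c_i\varphi_i)\| \le \omega^k\|f\|$ because $s_k$ minimizes exactly that functional by Lemma~1.2. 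Making this minimality work cleanly — that $\|\Delta^k s_k(f)\| \le \|\Delta^k f\|$ and hence the spline is comparably smooth to what it interpolates — is the linchpin that keeps the perturbation additive and of the exact order $\omega^{2d}\gamma^{2(k-d)}$ claimed, and getting the constant $C_0$ to be genuinely independent of $j$, $\rho$, $k$ and the particular $\rho$-admissible set is where the geometric uniformity from $(1.2)$, $(1.3)$ and the covering Lemma~1.1 must be invoked carefully.
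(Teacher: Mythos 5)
Your skeleton coincides with the paper's: the easy inequality $\lambda_j\le\lambda_j^{(k)}(M_\rho)$ from min--max, and then the use of the image $F=P^k_{M_\rho}(E_{\lambda_j})$ of the eigenspace span under the spline projector (shown to be $j$-dimensional) as a test subspace in (1.6). The genuine gap is in the step where you convert the approximation estimate of Theorem 1.4 into the claimed additive error $\omega^{2d}\gamma^{2(k-d)}$. Writing $g=s_k(f)$, $h=g-f$, the triangle inequality only gives $\|g\|\ge(1-\omega^{d}\gamma^{k-d})\|f\|$, hence $\|g\|^2\ge(1-\omega^{d}\gamma^{k-d})^2\|f\|^2$; your claimed $\|g\|^2\ge(1-\omega^{2d}\gamma^{2(k-d)})\|f\|^2$ would require $\langle f,h\rangle=0$, and the spline interpolation projector is \emph{not} an $L_2$-orthogonal projection. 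Likewise, upstairs one gets $\|\Delta^{1/2}g\|^2\le\|\Delta^{1/2}f\|^2+2\|\Delta^{1/2}f\|\,\|\Delta^{1/2}h\|+\|\Delta^{1/2}h\|^2$, whose cross term is of size $\lambda_j^{1/2}\omega^{d}\gamma^{k-d}$. So the naive perturbation delivers only $\lambda_j^{(k)}(M_\rho)\le\lambda_j+O(\omega^{d+1/2}\gamma^{k-d})$, a \emph{first}-power error in $\gamma^{k-d}$, whereas the theorem asserts the square $\omega^{2d}\gamma^{2(k-d)}$; since $\gamma<1$ and $k$ is large, this is a strictly weaker conclusion than inequality (1.8).

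The missing idea, which is the crux of the paper's Section 2, is to split the error $h_k=s_k(\psi)-\psi$ orthogonally relative to the eigenspace: $h_k=h_{k,j}+h_{k,j}^{\perp}$ with $h_{k,j}\in E_{\lambda_j}$, $h_{k,j}^{\perp}\in E_{\lambda_j}^{\perp}$. Because $E_{\lambda_j}$ is invariant under $\Delta$ (hence under $\Delta^{1/2}$), both cross terms vanish: with $\psi_{k,j}=\psi+h_{k,j}\in E_{\lambda_j}$ one has $s_k(\psi)=\psi_{k,j}+h_{k,j}^{\perp}$, $\|s_k(\psi)\|^2=\|\psi_{k,j}\|^2+\|h_{k,j}^{\perp}\|^2$ and $\|\Delta^{1/2}s_k(\psi)\|^2=\|\Delta^{1/2}\psi_{k,j}\|^2+\|\Delta^{1/2}h_{k,j}^{\perp}\|^2$. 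The component of the error lying inside $E_{\lambda_j}$ is harmless (it merely moves $\psi$ to another element of $E_{\lambda_j}$, whose Rayleigh quotient is still $\le\lambda_j$), while the orthogonal component contributes purely quadratically, giving $\|\Delta^{1/2}s_k(\psi)\|^2/\|s_k(\psi)\|^2\le\lambda_j+\|\Delta^{1/2}h_k\|^2/\|s_k(\psi)\|^2$. Combining this with $\|s_k(\psi)\|^2\ge\frac14\|\psi\|^2$ for $k$ large and with the estimate of Theorem 1.4 at $t=1/2$ (note that Theorem 1.4 already covers $\|\Delta^{t}(s_k(f)-f)\|$ for all $t\le d$, so your detour through interpolation between $\|g-f\|$ and $\|\Delta^{k}(g-f)\|$ plus the minimality of $\|\Delta^{k}s_k(f)\|$, while workable, is unnecessary) yields exactly the $\omega^{2d}\gamma^{2(k-d)}$ error. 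Your injectivity argument for $P_k|_{E_j}$ via the Poincar\'e inequality is fine and is an acceptable substitute for the paper's dimension count; but without the orthogonal splitting above your argument proves convergence of $\lambda_j^{(k)}(M_\rho)$ to $\lambda_j$, not the stated inequality with exponent $2(k-d)$.
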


The inequality (1.8) shows that there are three different ways to
determine
eigen values $\lambda_{j}.$

1) Eigen values from the interval $[0,\omega]$  can be determined
by keeping a set $M_{\rho}$ with $0<\rho<(C_{0}\omega)^{-1/2}$
fixed and by letting $k$ go to infinity.

2) By letting $\rho$ go to zero and keeping $k$ fixed one can determine
all of the eigen values.

3) The convergence will be even faster if $\rho$ goes to zero
and at the same time $k$ goes to infinity.

The following  Approximation Theorem plays a key role in the proof
of the Theorem 1.3.

\begin{thm}
There exist constants $C(M), \rho(M)>0$ such that for any
$0<\rho<\rho(M)$, any $\rho$-admissible set $M_{\rho}$,
any smooth function $f$ and any
$t\leq d$ the following inequality holds true
\begin{equation}
\|\Delta^{t}(s_{k}(f)-f)\|\leq \left(C(M)\rho^{2}\right)^{k-d}
\|\Delta
^{k}f\|,
\end{equation}
for any $k=(2^{l}+1)d, l=0, 1, ... .$
In particular, if $f$ is a linear combination of orthonormal eigen
functions whose corresponding eigen values belong to the
interval $[0, \omega],$ then for any $t\leq d$
\begin{equation}
\|\Delta^{t}(s_{k}(f)-f)\|\leq \omega^{d}
\left(C(M)\rho^{2}\omega\right)^{k-d}\|
f\|,
\end{equation}
where $k=(2^{l}+1)d, l=0, 1, ... .$

Moreover, we have the following estimates in the uniform norm on the
manifold
$$\sup_{x\in M}|(s_{k}(f)(x)-f(x))|\leq \left(C(M)\rho^{2}\right)^{k-d}
\|\Delta
^{k}f\|, k=(2^{l}+1)d, l=0, 1, ...
$$
and respectively,
$$
\sup_{x\in M}|(s_{k}(f)(x)-f(x))|\leq \omega^{d}
\left(C(M)\rho^{2}\omega\right)^{k-d}\|
f\|,k=(2^{l}+1)d, l=0, 1, ... ,
$$
if $f$ belongs to the span of eigenfunctions whose eigenvalues are
not greater than $\omega$.
\end{thm}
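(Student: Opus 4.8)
The plan is to derive the Approximation Theorem from a single sharp estimate that compares a function with its spline interpolant at the "base" smoothness level, and then bootstrap it to arbitrarily high smoothness by iterating the interpolation operator. First I would establish a \emph{Poincar\'e-type inequality} adapted to $\rho$-admissible sets: for $f\in H^{2d}(M)$ vanishing on $M_\rho$ one has $\|f\|\le C(M)\rho^{2d}\|\Delta^{d}f\|$, and more generally, for $t\le d$, an estimate of the form $\|\Delta^{t}g\|\le C(M)\rho^{2(d-t)}\|\Delta^{d}g\|$ whenever $g$ vanishes on $M_\rho$. This is the geometric heart of the argument; it should follow from the Covering Lemma 1.1 by a local Sobolev/Poincar\'e inequality on each ball $B(x_i,\rho)$ (whose geometry is controlled uniformly thanks to the Bishop--Gromov bound and the multiplicity bound $R_0(M)$), summed against the partition of unity used to define the norm (1.1), together with the regularity theorem identifying that norm with the graph norm of $\Delta^{d}$. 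The condition $k=(2^l+1)d$ is precisely what makes the bootstrap below close up cleanly.

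Next I would exploit the variational characterization of $s_k(f)$. Since $s_k(f)$ minimizes $\|\Delta^{k}\cdot\|$ among all $H^{2k}$ functions interpolating $f$ on $M_\rho$, the difference $g=s_k(f)-f$ vanishes on $M_\rho$, and one has the energy inequality $\|\Delta^{k}s_k(f)\|\le\|\Delta^{k}f\|$, hence $\|\Delta^{k}g\|\le 2\|\Delta^{k}f\|$; in fact the orthogonality coming from the Euler--Lagrange equation (1.4) gives the cleaner bound $\|\Delta^{k}g\|\le\|\Delta^{k}f\|$ (the interpolant is the $\Delta^k$-orthogonal projection off the annihilator of point evaluations). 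Then I would iterate the Poincar\'e inequality: applying it to $g$ gives $\|\Delta^{t}g\|\le C\rho^{2(d-t)}\|\Delta^{d}g\|$; to push the power of $\rho$ up, I apply the same inequality to $\Delta^{d}g$ against $\Delta^{2d-?}$... more precisely, using that $g$ vanishes on $M_\rho$ and that the relevant powers are multiples of $d$, one iterates $l$ times to get $\|\Delta^{t}g\|\le (C(M)\rho^2)^{k-d}\|\Delta^{k}g\|$ when $k=(2^l+1)d$. Combining with $\|\Delta^{k}g\|\le\|\Delta^{k}f\|$ yields (1.11).

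The second inequality (1.12) is then immediate: if $f=\sum_{\lambda_i\le\omega}c_i\varphi_i$, the spectral theorem gives $\|\Delta^{k}f\|\le\omega^{k}\|f\|$ and $\|\Delta^{k}f\| = \|\Delta^{d}\Delta^{k-d}f\| \le \omega^{k-d}\|\Delta^{d}f\|\le \omega^{k-d}\cdot\omega^{d}\|f\|$; substituting $\|\Delta^k f\|\le \omega^{k-d}\omega^{d}\|f\|$ into (1.11) and regrouping the powers produces $\omega^{d}(C(M)\rho^2\omega)^{k-d}\|f\|$. Finally, for the uniform estimates I would invoke the Sobolev embedding $H^{s}(M)\hookrightarrow C(M)$ for $s>d/2$: since $t$ ranges up to $d>d/2$, the bound $\sup_{x}|g(x)|\le C\|g\|_{H^{2d}(M)}\le C(\|g\|+\|\Delta^{d}g\|)$ is controlled by the $L_2$ estimates already obtained (taking $t=0$ and $t=d$ in (1.11)), which after absorbing constants into $C(M)$ gives exactly the stated sup-norm inequalities.

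I expect the main obstacle to be the Poincar\'e inequality with the correct \emph{power} $\rho^{2d}$ (not merely $\rho^2$) and with a constant depending only on $M$ and not on the particular admissible set — this requires carefully combining the local Poincar\'e inequalities on the balls $B(x_i,\rho/2)$ (which cover $M$) with the bounded overlap of the balls $B(x_i,\rho)$, controlling how the partition-of-unity cutoffs in (1.1) interact with powers of $\Delta$, and making sure the iteration that lifts smoothness from $d$ to $(2^l+1)d$ does not accumulate $\rho$-independent factors that blow up with $l$. Verifying that the energy-minimality of $s_k(f)$ indeed yields $\|\Delta^{k}(s_k(f)-f)\|\le\|\Delta^{k}f\|$ rather than a weaker constant is a secondary but important point, handled via the characterization in Lemma 1.2 of $S^k(M_\rho)$ as the solution space of (1.4).
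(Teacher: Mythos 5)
Your overall architecture matches the paper's: a Poincar\'e-type inequality for functions vanishing on $M_{\rho}$ (the paper's Lemma 4.1, which for such functions gives $\|g\|\leq C(M,k)\rho^{2k}\|\Delta^{k}g\|$), the variational/orthogonality bound $\|\Delta^{k}(s_{k}(f)-f)\|\leq \|\Delta^{k}f\|$ coming from the Euler--Lagrange characterization, the spectral bound $\|\Delta^{k}f\|\leq\omega^{k}\|f\|$ for $f\in E_{\omega}$, and the Sobolev embedding for the sup-norm statements. However, the step where you lift the base inequality from smoothness $d$ to smoothness $k=(2^{l}+1)d$ contains a genuine gap: you propose to ``apply the same inequality to $\Delta^{d}g$,'' but the Poincar\'e inequality you are iterating is only available for functions that vanish on $M_{\rho}$, and $\Delta^{d}g$ does \emph{not} vanish on $M_{\rho}$ (vanishing at the nodes is not inherited by Laplacian powers). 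If instead you use the full Lemma 4.1 with its nodal term, you are left with $\rho^{d/2}\bigl(\sum_{i}|\Delta^{d}g(x_{i})|^{2}\bigr)^{1/2}$, which you have no means to control. So the iteration, as described, does not close, and this is exactly the point where all the difficulty of the theorem sits.

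The paper's device for this step is its Lemma 4.3 (Lemma 1.6 in the Introduction): an ``exponentiation'' lemma asserting that if a \emph{single} function satisfies $\|f\|\leq a\|\Delta^{s}f\|$, then the same function automatically satisfies $\|\Delta^{t}f\|\leq a^{m}\|\Delta^{ms+t}f\|$ for all $t\geq 0$ and $m=2^{l}$. Its proof is purely spectral: expand $f$ in eigenfunctions, split the spectrum at the threshold $\lambda=a^{-1/s}$, and observe that multiplying the resulting two-sided inequality by $a^{2}\lambda_{j}^{2s}$ only improves it, which doubles the admissible exponent at each step. No vanishing condition on $\Delta^{d}g$ is ever needed; the hypothesis is used once, for $g$ itself, via Lemma 4.1, and the rest is self-improvement of that one inequality. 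This doubling is also what explains the restriction to exponents $k=(2^{l}+1)d$ and why the constants depend only on $M$ and not on $l$ --- the concern you flag at the end. Your proposal would become correct if you replace the invalid re-application of the Poincar\'e inequality to $\Delta^{d}g$ by this spectral bootstrapping lemma (or prove an equivalent statement); as written, the central estimate $\|\Delta^{t}g\|\leq (C(M)\rho^{2})^{k-d}\|\Delta^{k}g\|$ is not established.
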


Proofs of the Theorems 1.3 and 1.4 show that the constants $C_{0}(M),
C(M),\rho(M)$ depend on the bounds on the curvature of $M$.

We obtain our Approximation Theorem as a consequence of the following
inequality that is a Poincare-type inequality.

\begin{thm}
There exist $C(M), \rho(M)>0$ such that for any $0<\rho<\rho(M)$,
any $\rho$-admissible set $M_{\rho}$ and any $f\in H^{2dm}(M)$
whose restriction to $M_{\rho}$ is zero the following inequality
holds true
\begin{equation}
\|f\|\leq \left(C(M)\rho^{2d}\right)^{m}
\|\Delta^{dm}f\|,
\end{equation}
where $ m=2^{l}, l=0, 1, ....$
\end{thm}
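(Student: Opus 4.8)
The plan is to prove the inequality first at the base scale $m=1$ (i.e. $k=d$ in the notation of the other theorems), obtaining an estimate of the form $\|f\|\le C(M)\rho^{2d}\|\Delta^{d}f\|$ for functions vanishing on a $\rho$-admissible set, and then to bootstrap to $m=2^{l}$ by iterating this estimate along a dyadic ladder of exponents. The base case itself should be reduced to a \emph{local} Poincaré inequality on each ball $B(x_{i},\rho/2)$ of the cover: if $f(x_{i})=0$, then on that ball $f$ is controlled by its derivatives, and summing the local estimates (using that the balls $B(x_{i},\rho/2)$ cover $M$ and the balls $B(x_{i},\rho)$ have bounded overlap $R_{0}(M)$, together with the volume-comparison bound (1.3)) yields a global bound $\|f\|^{2}\le C\rho^{2}\sum_{i}\|\nabla f\|^{2}_{L_{2}(B(x_{i},\rho))}\le C'\rho^{2}\|\nabla f\|^{2}$, hence $\|f\|\le C\rho\|\nabla f\|=C\rho\|\Delta^{1/2}f\|$ up to the curvature-dependent constants. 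The geometric input — that a $\rho$-admissible set is dense enough that every point of $M$ lies within $\rho/2$ of some $x_{i}$, so a zero of $f$ is never far away — is exactly what Lemma 1.1 and Definition 1.2 are designed to supply.

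Next I would promote the first-order inequality $\|f\|\le C\rho\|\Delta^{1/2}f\|$ to the order-$d$ inequality $\|f\|\le (C\rho)^{2d}\|\Delta^{d}f\|$. The subtlety is that $\Delta^{1/2}f$, $\Delta f$, etc., do \emph{not} vanish on $M_{\rho}$, so one cannot naively re-apply the same inequality to $\Delta^{1/2}f$. The standard device is to interpolate: combine the vanishing-trace Poincaré bound with the elliptic a priori estimate (the regularity theorem quoted after (1.1), which says $\|g\|_{H^{s}}\sim \|g\|+\|\Delta^{s/2}g\|$) and a Sobolev/interpolation inequality of the form $\|\Delta^{j/2}f\|\le \varepsilon\|\Delta^{(j+1)/2}f\|+C\varepsilon^{-a}\|f\|$. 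Feeding the Poincaré bound into the last term and choosing $\varepsilon\sim\rho$ lets one trade one unit of smoothness for one factor of $\rho^{?}$; iterating $2d$ times (this is why the admissible exponents come in the multiples $k=(2^{l}+1)d$, so that the bootstrap closes on the dyadic scale) produces $\|f\|\le (C(M)\rho^{2d})\|\Delta^{d}f\|$ — the $m=1$ case of (1.13) — with a constant depending only on $d$, $b$, $k$, $r$ through $R_{0}(M)$.

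Finally, the dyadic iteration: assuming (1.13) holds for $m$, apply it and then apply it \emph{again} to the function $\Delta^{dm}f$ — but that function does not vanish on $M_{\rho}$ either, so instead one runs the same interpolation-bootstrap argument one more octave up, doubling the exponent from $dm$ to $2dm$ and squaring the small factor $C(M)\rho^{2d}$ accordingly; after $l$ steps one reaches $m=2^{l}$ with constant $(C(M)\rho^{2d})^{m}$ and top-order operator $\Delta^{dm}$. The main obstacle, and the part deserving the most care, is precisely this passage from the zeroth-order (trace-vanishing) Poincaré estimate to the higher-order ones: keeping the constants uniform in $\rho$ (only the explicit power $\rho^{2d}$ should carry the $\rho$-dependence, all other constants absorbed into $C(M)$) requires choosing the interpolation parameters as the right powers of $\rho$ and checking that the geometric constants $b, R_{0}(M)$ propagate through each elliptic estimate without accumulating extra $\rho$-dependence — the restriction to the special exponents $k=(2^{l}+1)d$ is exactly the bookkeeping that makes this iteration terminate cleanly.
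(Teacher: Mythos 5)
Your dyadic-iteration step is essentially the paper's Lemma 1.6/4.3 (and your "run it one more octave up" can indeed be made rigorous purely spectrally: from $\|f\|\leq A\|\Delta^{dm}f\|$ and the interpolation inequality $\|\Delta^{dm}f\|\leq\|f\|^{1/2}\|\Delta^{2dm}f\|^{1/2}$ one gets $\|f\|\leq A^{2}\|\Delta^{2dm}f\|$ without ever needing $\Delta^{dm}f$ to vanish on $M_{\rho}$). The problem is your base case. You claim that $f(x_{i})=0$ yields a local first-order Poincar\'e inequality $\|f\|_{L_{2}(B(x_{i},\rho/2))}\leq C\rho\|\nabla f\|_{L_{2}(B(x_{i},\rho))}$, and hence globally $\|f\|\leq C\rho\|\Delta^{1/2}f\|$. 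This is false for $d\geq 2$: a single point has zero $H^{1}$-capacity, so vanishing at the center of a ball imposes no constraint visible to the gradient norm. Concretely, take $f\equiv 1$ on $B(x_{i},\rho)$ except in a ball of radius $\delta$ around $x_{i}$ where it drops smoothly to $0$; then $\|f\|_{L_{2}(B(x_{i},\rho/2))}\sim\rho^{d/2}$ while $\|\nabla f\|^{2}_{L_{2}}\sim\delta^{d-2}\to 0$ as $\delta\to 0$ (and $\to 0$ logarithmically for $d=2$). Since the whole promotion to order $d$ is fed by this first-order estimate, the argument collapses at the start; no choice of interpolation parameters can repair a base inequality that is simply untrue.

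This is exactly why the paper's Lemma 4.1 never passes through order one. It proves the needed estimate at order $k>d-1$ directly: on each ball $B(x_{i},\rho/2)$ one Taylor-expands $\varphi_{\nu}f$ around $x_{i}$ to order $n>d/2$, controls the derivative values $\partial^{|\alpha|}(\varphi_{\nu}f)(x_{i})$ by the pointwise Sobolev-type inequality (4.5) (which itself requires $m>d/2$ derivatives — the same threshold that makes point evaluation meaningful, $k=n+m-1>d-1$), estimates the integral remainder by Cauchy--Schwarz, sums over the cover using the bounded multiplicity $R_{0}(M)$, and then absorbs the intermediate Sobolev norms via the regularity theorem and the additive interpolation inequality with a free parameter $a$. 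With $f|_{M_{\rho}}=0$ this gives $\|f\|\leq C(M)\rho^{2d}\|\Delta^{d}f\|$ (the $m=1$ case), after which Lemma 4.3 supplies the exponentiation to all $m=2^{l}$, which is the part of your plan that is sound. A smaller inaccuracy: the special exponents $k=(2^{l}+1)d$ in Theorems 1.3--1.4 come from combining the dyadic exponentiation with the spline minimization property, not from any need to iterate a first-order estimate $2d$ times; in Theorem 1.5 itself the exponents are simply $dm$, $m=2^{l}$.
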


According to our main Theorem 1.3, if we are going to determine
the spectrum on an interval $[0, \omega], \omega>0,$ we have to
use an $\rho$-admissible set of points $M_{\rho}$, where
$0<\rho<(C_{0}\omega)^{-1/2}.$ It is clear that the cardinality of
$M_{\rho}$ i. e. $N=|M_{\rho}|$ cannot be less than the number of
eigen values on the interval $[0, \omega].$ In fact if
$\rho=\epsilon \left(C_{0}\omega\right)^{-1/2}, 0<\epsilon<1,$
then the number of points in $M_{\rho}$ is approximately
\begin{equation}
N=|M_{\rho}|\asymp\frac{Vol M}{\rho^{d}}= \epsilon^{-d}
C_{0}^{d/2}Vol M\omega^{d/2}.
\end{equation}
Note, that according to the Weyl's asymptotic formula the number of
eigen values on an interval $[0,\omega]$ is asymptotically
\begin{equation}
c Vol M\omega^{d/2}.
\end{equation}
In other words our method requires an "almost" optimal number of points
for admissible sets $M_{\rho}.$

It is important to realize an interesting feature of the inequalities
(1.9)- (1.11): all the constants and the interval for admissible
$\rho$'s depend solely on the manifold, while the exponents $k$ and
$m$ can be made arbitrary large.
These inequalities are consequences of the inequality (1.11).
To obtain (1.11) we establish it first for  $m=1$ and then
apply the following result which allows to "exponentiate" right-hand sides
of some inequalities.

\begin{lem}
1) If for some $f\in H^{2s}(M), a,s>0,$
\begin{equation}
\|f\|\leq a\|\Delta^{s}f\|,
\end{equation}
then for the same $f, a, s$ and all $t\geq 0, m=2^{l}, l=0, 1, ...,$
\begin{equation}
\|\Delta^{t}f\|\leq a^{m}\|\Delta^{ms+t}f\|,
\end{equation}
if $f\in H^{2(ms+t)}(M).$

\end{lem}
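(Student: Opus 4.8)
The whole lemma follows from the spectral calculus of the self-adjoint positive operator $\Delta$ on $L_{2}(M)$, the only analytic input being the Cauchy--Schwarz inequality. I would split the proof into the base case $m=1$ and a self-composition step.

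\emph{Base case ($l=0$, all $t\ge0$).} I claim that (1.14) already forces $\|\Delta^{t}f\|\le a\,\|\Delta^{s+t}f\|$ for every $t\ge0$, provided $f\in H^{2(s+t)}(M)$ so that both sides are finite. Write $\|\Delta^{\alpha}f\|^{2}=\int_{0}^{\infty}\lambda^{2\alpha}\,d\mu_{f}(\lambda)$, where $\mu_{f}$ is the spectral measure of $f$ relative to $\Delta$. By Cauchy--Schwarz in $d\mu_{f}$ --- equivalently, using self-adjointness of the fractional powers, $\|\Delta^{(\alpha+\beta)/2}f\|^{2}=\langle\Delta^{\alpha}f,\Delta^{\beta}f\rangle\le\|\Delta^{\alpha}f\|\,\|\Delta^{\beta}f\|$ --- the function $\alpha\mapsto\log\|\Delta^{\alpha}f\|^{2}$ is convex on $[0,s+t]$. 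A convex function has nondecreasing secant slopes, so for $t\ge0$ its slope over $[t,\,t+s]$ is at least its slope over $[0,s]$; since (1.14) says $\log\|f\|^{2}-\log\|\Delta^{s}f\|^{2}\le 2\log a$, the latter slope is $\ge-(2\log a)/s$, and we conclude $\log\|\Delta^{t}f\|^{2}-\log\|\Delta^{s+t}f\|^{2}\le 2\log a$, i.e.\ $\|\Delta^{t}f\|\le a\,\|\Delta^{s+t}f\|$.

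\emph{Exponentiation ($m=2^{l}$).} Now I would induct on $l$. Assume $\|\Delta^{t}f\|\le a^{2^{l}}\|\Delta^{2^{l}s+t}f\|$ for all $t\ge0$ (the base case is $l=0$). Applying this once with exponent $t$ and once with exponent $2^{l}s+t$ in place of $t$,
$$
\|\Delta^{t}f\|\le a^{2^{l}}\|\Delta^{2^{l}s+t}f\|\le a^{2^{l}}\,a^{2^{l}}\,\|\Delta^{2^{l}s+2^{l}s+t}f\|=a^{2^{l+1}}\|\Delta^{2^{l+1}s+t}f\| ,
$$
which is the case $l+1$; with $m=2^{l}$ this is (1.15). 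Composing the estimate with itself is precisely what "exponentiates'' the right-hand side, doubling the shift and squaring the constant at each step. (The same convexity argument applied directly over $[0,ms]$ versus $[t,\,t+ms]$ would in fact yield (1.15) for every positive integer $m$.)

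\emph{Where the difficulty lies.} The real content is the base case, and within it the passage to \emph{arbitrary real} $t\ge0$: for $t$ an integer multiple of $s$ one can instead iterate the self-adjointness identity $\|\Delta^{s}f\|^{2}=\langle\Delta^{2s}f,f\rangle\le\|\Delta^{2s}f\|\,\|f\|$ and the midpoint inequalities it generates, but to interpolate to every $t$ one genuinely needs the log-convexity of $\alpha\mapsto\|\Delta^{\alpha}f\|$. The remaining care is bookkeeping of domains: each $\Delta^{\alpha}f$ with $0\le\alpha\le ms+t$ must lie in $L_{2}(M)$ for the inner products, norms, and the convexity statement to make sense, and this is exactly what the hypothesis $f\in H^{2(ms+t)}(M)$ supplies, together with the inclusions $H^{2\beta}(M)\subseteq H^{2\alpha}(M)$ for $\alpha\le\beta$ on the compact manifold $M$.
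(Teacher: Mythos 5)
Your proof is correct, but it runs along a different track than the paper's. The paper works with the eigenfunction expansion $c_{j}=\langle f,\varphi_{j}\rangle$ and turns the hypothesis $\|f\|\leq a\|\Delta^{s}f\|$ into a ``mass transfer'' statement: splitting the spectrum at the threshold $\lambda=a^{-1/s}$, the deficit $\sum_{\lambda_{j}\leq a^{-1/s}}(|c_{j}|^{2}-a^{2}\lambda_{j}^{2s}|c_{j}|^{2})$ is dominated by the excess $\sum_{\lambda_{j}>a^{-1/s}}(a^{2}\lambda_{j}^{2s}|c_{j}|^{2}-|c_{j}|^{2})$; multiplying term by term with the weight $a^{2}\lambda_{j}^{2s}$, which is $\leq1$ below the threshold and $>1$ above it, only improves this inequality and yields $a\|\Delta^{s}f\|\leq a^{2}\|\Delta^{2s}f\|$, whence $\|f\|\leq a^{m}\|\Delta^{ms}f\|$ by the same doubling induction you use; a final weighting by $a^{2\tau}\lambda_{j}^{2\tau s}$ with $t=s\tau$ inserts the factor $\Delta^{t}$. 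You instead prove the stronger shifted base estimate $\|\Delta^{t}f\|\leq a\|\Delta^{s+t}f\|$ for all $t\geq0$ in one stroke, via log-convexity of $\alpha\mapsto\|\Delta^{\alpha}f\|^{2}$ (Cauchy--Schwarz in the spectral measure), and then iterate. Both arguments rest on the same structural fact --- the hypothesis propagates up the spectrum because a weight increasing in $\lambda$ preserves the inequality --- and the paper's weight-monotonicity step can be viewed as a discrete cousin of your convexity step. What your route buys: it gives (1.15) for every positive integer $m$ (indeed the convexity argument over $[t,t+ms]$ versus $[0,s]$ gives it directly), not just $m=2^{l}$, and it isolates the interpolation in a single standard lemma; the paper's route stays at the level of elementary series manipulation with no appeal to convexity. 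Your domain bookkeeping ($f\in H^{2(ms+t)}(M)$ so all intermediate norms are finite, and the degenerate case $\|\Delta^{\alpha}f\|=0$ forcing $f$ constant, hence $f=0$ under the hypothesis) is the only place requiring care, and you have addressed it.
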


As an application of our main result we prove that the
zeta-function $\zeta(s)$ of the Laplace-Beltrami operator
\begin{equation}
\zeta(s)=\sum_{\lambda_{i}\neq 0}\lambda_{i}^{-s}.
\end{equation}
is the uniform limit of zeta-functions for finite-dimensional
 operators $D^{k}.$
Namely, we choose a sequence $\eta_{n}$ that goes to zero and for every
$\eta_{n}$ construct a set $M_{\eta_{n}}.$ For a fixed $k$ that is large
enough
we consider the space $S^{k}(M_{\eta_{n}})$ and the eigen values
 of the corresponding operator $D^{(k)}$ defined by (1.7)
we denote as $\lambda_{i}^{(k)}
(\eta_{n}).$ The $\zeta$-function for a finite-dimensional operator
$D^{(k)}$ is denoted by $\zeta_{\eta_{n}}(s).$

\begin{thm}
The sequence of $\zeta$-functions $\zeta_{\eta_{n}}(s)$ converges
uniformly to $\zeta(s)$ on compact subsets of the set
$\{s=u+iv|u>d/2\}.$
\end{thm}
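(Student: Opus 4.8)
The plan is to prove uniform convergence of $\zeta_{\eta_n}(s)\to\zeta(s)$ on compacta of $\{u>d/2\}$ by splitting each zeta series at a cutoff index and controlling head and tail separately, using Theorem 1.3 for the head and Weyl-type eigenvalue asymptotics for the tails. First I would fix a compact set $K\subset\{u>d/2\}$ and choose $\sigma_0$ with $d/2<\sigma_0\le\mathrm{Re}(s)$ for all $s\in K$. For the tail estimate, recall that Weyl's law gives $\lambda_i\asymp i^{2/d}$, so the true tail $\sum_{i>J}\lambda_i^{-s}$ is bounded in modulus by $C\sum_{i>J} i^{-2\sigma_0/d}$, which is the tail of a convergent series (since $2\sigma_0/d>1$) and hence is uniformly small on $K$ for $J$ large. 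I would need the analogous bound for the discrete eigenvalues $\lambda_i^{(k)}(\eta_n)$; here the key point is that $\lambda_i^{(k)}(\eta_n)\ge\lambda_i$ by the right-hand inequality in Theorem 1.3 (which, once $\eta_n<(C_0\omega)^{-1/2}$ for the relevant range, holds for eigenvalues below $\omega$), together with the observation that the counting function of the $D^{(k)}(\eta_n)$-spectrum is controlled from above in the same way — because $S^k(M_{\eta_n})$ has dimension $N=|M_{\eta_n}|\asymp \eta_n^{-d}$, and by the min-max characterization (1.6) the $i$-th discrete eigenvalue dominates the $i$-th continuous one, so $\lambda_i^{(k)}(\eta_n)\ge\lambda_i\ge c\,i^{2/d}$. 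Thus $\sum_{i} (\lambda_i^{(k)}(\eta_n))^{-s}$ has a tail uniformly small on $K$, uniformly in $n$.

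Next, for the head $\sum_{i\le J}$, I would use Theorem 1.3 directly: pick $\omega$ with $\lambda_J<\omega$, and choose $k$ large and then let $n\to\infty$ so that $\eta_n\to 0$; then for each fixed $i\le J$, $\gamma=C_0\eta_n^2\omega\to 0$, so the lower bound in (1.8), $\lambda_i\le\lambda_i^{(k)}(\eta_n)\le\lambda_i+\omega^{2d}\gamma^{2(k-d)}$, forces $\lambda_i^{(k)}(\eta_n)\to\lambda_i$ as $n\to\infty$. Since $i\mapsto\lambda_i^{-s}$ is, for $\lambda_i$ bounded away from $0$ (true because $\lambda_1>0$ and all these eigenvalues exceed $\lambda_1$ minus a vanishing error, hence are $\ge\lambda_1/2$ eventually), a continuous function of $\lambda_i$ that is bounded uniformly for $s\in K$ and $\lambda_i$ in a fixed compact positive interval, the finite sum $\sum_{i=1}^{J}(\lambda_i^{(k)}(\eta_n))^{-s}$ converges to $\sum_{i=1}^{J}\lambda_i^{-s}$ uniformly on $K$. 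One must be slightly careful that the eigenvalue $0$ is excluded on both sides: the continuous Laplacian has $0$ as a simple eigenvalue, and $D^{(k)}(\eta_n)$ likewise has $0$ as a simple eigenvalue (the constant function lies in $S^k(M_{\eta_n})$ and is in the kernel of $\Delta$), so the indexing of nonzero eigenvalues matches up consistently.

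Combining the two pieces via the triangle inequality — head converges, both tails are uniformly small — gives: for any $\varepsilon>0$, choose $J$ so both tails are $<\varepsilon/3$ uniformly, then choose $N_0$ so that for $n\ge N_0$ the head differs by $<\varepsilon/3$; hence $|\zeta_{\eta_n}(s)-\zeta(s)|<\varepsilon$ for all $s\in K$ and $n\ge N_0$. The main obstacle I anticipate is making the tail bound for the discrete spectra genuinely uniform in $n$: one needs that the discrete counting function $\#\{i:\lambda_i^{(k)}(\eta_n)\le t\}$ does not exceed roughly $c\,t^{d/2}$ with a constant independent of $n$. This follows cleanly from $\lambda_i^{(k)}(\eta_n)\ge\lambda_i$ and Weyl asymptotics for $\Delta$, provided one first verifies that inequality holds for \emph{all} indices $i\le N$ in the relevant range (not merely those with $\lambda_i\le\omega$); this is exactly the content of the min-max comparison in (1.6) versus the min-max formula for $\lambda_j$ over all of $L_2(M)$, since restricting the infimum to subspaces of $S^k(M_{\eta_n})$ can only increase it. Once that monotonicity is in hand, the rest is bookkeeping with convergent $p$-series and the continuity of $t\mapsto t^{-s}$ on compact subsets of $(0,\infty)$ uniformly for $s\in K$.
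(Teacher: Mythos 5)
Your proposal is correct and is essentially the paper's own argument: split the series at a cutoff, dominate the discrete tail termwise via the min--max monotonicity $\lambda_i^{(k)}(\eta_n)\ge\lambda_i$, and get convergence of the finite head from Theorem 1.3 as $\eta_n\to 0$ with $k$ fixed. The only cosmetic difference is your appeal to Weyl asymptotics for the tail bounds, which is unnecessary: the paper just uses the known absolute convergence of $\sum_i\lambda_i^{-s}$ for $\mathrm{Re}\,s>d/2$ (Minakshisundaram--Pleijel) together with the same termwise domination $(\lambda_i^{(k)}(\eta_n))^{-\mathrm{Re}\,s}\le\lambda_i^{-\mathrm{Re}\,s}$.
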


\section{Proof of the Theorem 1.3}

Let $P^{k}_{M_{\rho}}$ be the projector from $ H^{d/2+1}(M)$ onto
the space $S^{k}(M_{\rho})$ defined by the formula
 $P^{k}_{M_{\rho}}f=
s_{k}(f).$ Note that the function $s_{k}(f)$ depends on the set
$M_{\rho}.$

For a given $\omega>0$ let
$0<\lambda_{1}\leq\lambda_{2}\leq...\leq\lambda_{j(\omega)}
\leq\omega$ be the set of all eigen values  counted with their
multiplicities which are not greater than $\omega$. If
$\varphi_{1},\varphi_{2},...,\varphi_{j(\omega)}$ is the set of
corresponding
orthonormal eigen functions then their $span$ is denoted by $E_{\omega}$.
Note, that $dim E_{\lambda_{i}}=i.$ If $\omega \in [\lambda_{j(\omega)},
\lambda_{j(\omega)+1})$ then $E_{\omega}=E_{\lambda_{j(\omega)}}$ and
$dim E_{\omega}= dim E_{\lambda_{j(\omega)}}=j(\omega).$

According to Approximation Theorem 1.4, inequality (1.10),  for any
$\varphi_{i}$ such that
 the corresponding
$\lambda_{i}\leq \omega $ we have
$$\|s_{k}(\varphi_{i})-\varphi_{i}\|\leq\omega^{d} (C_{0}\rho^{2}
\omega)^{k-d}, s_{k}(\varphi_{i})\in S^{k}(M_{\rho}), k=d(2^{l}+1),
l=0, 1, ...$$
The right hand side in the last inequality goes to zero for
$0<\rho<(C_{0}\omega)^{-1/2}$ and large $k$.
Thus, the  dimension of $P^{k}_{M_{\rho}}(E_{\omega})$ is $j(\omega)$
as long as $0<\rho<(C_{0}\omega)^{-1/2}$ and $k$ is large enough.

Next according to the min-max principle the eigen value
$\lambda_{j}$ of $\Delta$ can be defined by the formula
$$
\lambda_{j}=inf_{F\subset L_{2}(M)} sup_{f\in F}
\frac{\|\Delta^{1/2}f\|^{2}}{\|f\|^{2}},f\neq 0,$$
 where $inf$ is taken over all $j$-dimensional subspaces of $L_{2}(M)$.

It is clear that
\begin{equation}
\lambda_{j} \leq \lambda_{j}^{(k)}(M_{\rho})\leq
sup_{f\in P^{k}_{M_{\rho}}(E_{\lambda_{j}})}
\frac{\|\Delta^{1/2}f\|^{2}}{\|f\|^{2}}, f\neq 0,
\end{equation}
where $\lambda_{j}^{(k)}$ is defined by (1.5), $\lambda_{j}\leq\omega,
0<\rho<(C_{0}\omega)^{-1/2}$ and $k$ is large enough.

For any $\psi\in E_{\lambda_{j}}$, set
$h_{k}=s_{k}(\psi)-\psi,$ and
$$
h_{k}=h_{k,j}+h_{k,j}^{\perp},
$$
where $h_{k,j}\in E_{\lambda_{j}}, h_{k,j}^{\perp}\in
E_{\lambda_{j}}^{\perp}.$

It gives

$$\Delta^{1/2}h_{k}=\Delta^{1/2}h_{k,j}+\Delta^{1/2}h_{k,j}^{\perp}.$$
Since $\Delta$ is
self adjoint and $E_{\lambda_{j}}$ is its invariant subspace the terms on
the
 right are orthogonal and we obtain

\begin{equation}
 \|\Delta^{1/2}h_{k,j}^{\perp}\|\leq
\|\Delta^{1/2}h_{k}\|.
\end{equation}

It is clear that the orthogonal projection
of $s_{k}(\psi)$ onto $E_{\lambda_{j}} $ is $\psi+h_{k,j}=\psi_{k,j}.$
 Since $s_{k}(\psi)=\psi_{k,j}+h_{k,j}^{\perp},$ we have
$$
\|s_{k}(\psi)\|^{2}\geq \|\psi_{k,j}\|^{2}
$$
and we also have

$$\Delta^{1/2}s_{k}(\psi)=\Delta^{1/2}\psi_{k,j}+
\Delta^{1/2}h_{k,j}^{\perp},
$$
that implies

$$\|\Delta^{1/2}s_{k}(\psi)\|^{2}=\|\Delta^{1/2}\psi_{k,j}\|^{2}+
\|\Delta^{1/2}h_{k,j}^{\perp}\|^{2}.$$

After all we obtain the following inequality

$$
\frac{\|\Delta^{1/2}s_{k}(\psi)\|^{2}}{\|s_{k}(\psi)\|^{2}}\leq
\frac{\|\Delta^{1/2}\psi_{k,j}\|^{2}}{\|\psi_{k,j}\|^{2}}+
\frac{\|\Delta^{1/2}h_{k,j}^{\perp}\|^{2}}{\|s_{k}(\psi)\|^{2}}.
$$

The last inequality along with inequalities (2.1) and (2.2) gives
\begin{equation}
\frac{\|\Delta^{1/2}s_{k}(\psi)\|^{2}}{\|s_{k}(\psi)\|^{2}}\leq
\lambda_{j}+
\frac{\|\Delta^{1/2}h_{k}\|^{2}}{\|s_{k}(\psi)\|^{2}}.
\end{equation}

In what follows we will use the notation
$$h_{k}^{(i)}=h_{k}^{(i)}(M_{\rho})=s_{k}(\varphi_{i})-\varphi_{i},$$
where $\varphi_{i}$ is the $i$-th orthonormal eigen
function.

According to Approximation Theorem 1.4,  $\|h_{k}^{(i)}(M_{\rho})\|$ can
be done arbitrary
small for large $k$
if corresponding eigen value $\lambda_{i}\leq\omega$ and
$0<\rho<(C_{0}\omega)^{-1/2}$ because

$$\|h_{k}^{(i)}(M_{\rho})\|
\leq\omega^{d} (C_{0}\rho^{2}\omega)^{k-d},
 k=d(2^{l}+1), l=0, 1, ... .$$

Assume that $0<\rho<\left(C_{0}\omega\right)^{-1/2}$ and $k$ is so large
that
\begin{equation}
\sum_{i=1}^{j(\omega)}\|h_{k}^{(i)}(M_{\rho})\|^{2}\leq 1/2
\end{equation}
where $j(\omega)$ is the number of all eigen values (counting with
their multiplicities) which are not greater than $\omega$.

Using the fact that $\Delta^{1/2}$ is a self adjoint operator one
can
show that
$$
\|\Delta^{1/2}h_{k}\|\leq\|\psi\|\left(\sum_{i=1}^{j(\omega)}
\|\Delta^{1/2}h_{k}^{(i)}\|^{2}\right)
^{1/2},
$$
where $h_{k}=s_{k}(\psi)-\psi.$

The last inequality and the inequality
(2.3) imply
$$
\lambda_{j}^{(k)}(M_{\rho})-\lambda_{j}\leq sup_{\psi\in E_{\lambda_{j}}}
\frac{\|\Delta^{1/2}s_{k}(\psi)\|^{2}}{\|s_{k}(\psi)\|^{2}}-\lambda_{j}\leq
$$

\begin{equation}
sup_{\psi\in E_{\lambda_{j}}}\frac{\|\Delta^{1/2}h_{k}\|^{2}}
{\|s_{k}(\psi)\|^{2}}\leq
sup_{\psi\in
E_{\lambda_{j}}}\frac{\|\psi\|^{2}\sum_{i=1}^{j(\omega)}\|\Delta^{1/2}h_{k}^{(i)}\|
^{2}}{\|s_{k}(\psi)\|^{2}}.
\end{equation}

Since
$$\|\psi\|=\|s_{k}(\psi)-h_{k}\|\leq \|s_{k}(\psi)\|+\|h_{k}\|$$
and
$$\|h_{k}\|^{2}\leq \|\psi\|^{2}
\sum_{i=1}^{j(\omega)}\|h_{k}^{(i)}\|^{2}
\leq \frac{1}{2}\|\psi\|^{2},$$
we have
$$
\|s_{k}(\psi)\|^{2}\geq \left(\|\psi\|-\|h_{k}\|\right)^{2}\geq
\frac{1}{4}\|\psi\|^{2}.$$
After  using (2.5) we obtain
$$
\lambda_{j}^{(k)}(M_{\rho})-\lambda_{j}\leq
4\sum_{i=1}^{j(\omega)}\|\Delta^{1/2}h_{k}^{(i)}(M_{\rho})\|^{2}.
$$

Because the Sobolev space $H^{s}(M)$ is continuously embedded into
the space $H^{t}(M)$ if $ s>t$, we obtain that according
to the estimates (1.9) and (2.4)
$$
\|\Delta^{1/2}h_{k}^{(i)}(M_{\rho})\|^{2}\leq\omega^{2d}
\left(C(M)\rho^{2}\omega\right)^{2(k-d)}\|h_{k}^{(i)}(M_{\rho})\|^{2}
\leq\omega^{2d}\left(C_{0}\rho^{2}\omega\right)^{2(k-d)}.
$$

Finally we have
$$
\lambda_{j}\leq\lambda_{j}^{(k)}(M_{\rho})\leq\lambda_{j}+
\omega^{2d}\left(C_{0}\rho^{2}\omega\right)^{2(k-d)},
k=d(2^{l}+1), l=0, 1,... .
$$
 where $\lambda_{j}\leq\omega, 0<\rho<(C_{0}\omega)^{-1/2}$ and $k$ is
large enough.

Theorem 1.3 is proved.

\section{Approximation of the zeta-function of the
 Laplace-Beltrami operator}

The zeta-function $\zeta(s)$ of the Laplace-Beltrami operator is defined
by formula (1.16).
Since the paper of Minakshhisundaram and Pleijel [4] it is known
that this series
 converges absolutely for every $s=u+iv$ where $u>d/2.$ As a result
it
 converges uniformly on every half-plane whose closure is a proper subset
of the set $\{s=u+iv|u>d/2\}.$

Now we choose a sequence $\eta_{n}$ that goes to zero and for every
$\eta_{n}$ construct a set $M_{\eta_{n}}.$ For a fixed $k>d/2$
we consider the space $S^{k}(M_{\eta_{n}})$ and the eigen values
 of the corresponding operator $D^{(k)}$ we denote as $\lambda_{i}^{(k)}
(\eta_{n}).$ The $\zeta$-function for a finite -dimensional operator
$D^{(k)}$ is denoted by $\zeta_{\eta_{n}}(s).$

\begin{thm}
The sequence of $\zeta$-functions $\zeta_{\eta_{n}}(s)$ converges
uniformly to $\zeta(s)$ on compact subsets of the set
$\{s=u+iv|u>d/2\}$ as $\eta_{n}$ goes to zero.
\end{thm}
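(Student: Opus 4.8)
The plan is to combine the two-sided estimate of Theorem 1.3 for the low-lying eigenvalues with the crude one-sided comparison $\lambda_j^{(k)}(M_{\eta_n})\ge\lambda_j$ for all $j$, the latter being immediate from the min-max principle because $S^{k}(M_{\eta_n})$ is a subspace of $L_{2}(M)$. Fix a compact set $K\subset\{u+iv:u>d/2\}$ and put $\sigma_{0}=\min_{s\in K}\mathrm{Re}\,s>d/2$. Since $M$ is connected, $0$ is a simple eigenvalue and every nonzero eigenvalue is $\ge\lambda_{2}>0$; an elementary estimate then produces a constant $C_{K}$, depending only on $K$ and $\lambda_{2}$, such that $|\mu^{-s}|\le C_{K}\mu^{-\sigma_{0}}$ for all $\mu\ge\lambda_{2}$ and all $s\in K$. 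Moreover $D^{(k)}=D^{(k)}(M_{\eta_n})$ is a finite matrix whose kernel is spanned by the constant function, so $\zeta_{\eta_n}(s)$ is the finite sum $\sum_{j=2}^{N_{n}}\bigl(\lambda_j^{(k)}(\eta_n)\bigr)^{-s}$, where $N_{n}=|M_{\eta_n}|$ and $\lambda_j^{(k)}(\eta_n)\ge\lambda_{j}>0$ for $j\ge2$.

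Given $\varepsilon>0$, I would first choose $\omega>0$ so large that $C_{K}\sum_{\lambda_{j}>\omega}\lambda_{j}^{-\sigma_{0}}<\varepsilon/3$, which is possible because the series defining $\zeta(\sigma_{0})$ converges. Let $J=J(\omega)$ be the number of eigenvalues of $\Delta$, counted with multiplicity, not exceeding $\omega$; since $N_{n}=|M_{\eta_n}|\to\infty$ as $\eta_n\to0$, we have $N_{n}\ge J$ for all large $n$. I would then split both $\zeta$-sums at the index $J$. For $j>J$ one has $\lambda_j^{(k)}(\eta_n)\ge\lambda_{j}>\omega$, hence $\bigl(\lambda_j^{(k)}(\eta_n)\bigr)^{-\sigma_{0}}\le\lambda_{j}^{-\sigma_{0}}$, so each of the two tails $\sum_{j>J}|\lambda_{j}^{-s}|$ and $\sum_{j=J+1}^{N_{n}}\bigl|\bigl(\lambda_j^{(k)}(\eta_n)\bigr)^{-s}\bigr|$ is bounded by $C_{K}\sum_{\lambda_{j}>\omega}\lambda_{j}^{-\sigma_{0}}<\varepsilon/3$, uniformly in $s\in K$ and in $n$.

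There remains the finite ``bulk'' $\sum_{j=2}^{J}\bigl(\bigl(\lambda_j^{(k)}(\eta_n)\bigr)^{-s}-\lambda_{j}^{-s}\bigr)$, and this is where Theorem 1.3 enters. For the (fixed) value of $k$, taken of the admissible form $(2^{l}+1)d$, and for $n$ so large that $\eta_n<(C_{0}\omega)^{-1/2}$, that theorem gives $0\le\lambda_j^{(k)}(\eta_n)-\lambda_{j}\le\omega^{2d}\bigl(C_{0}\eta_n^{2}\omega\bigr)^{2(k-d)}$ for every $j\le J$, with a right-hand side independent of $j$ that tends to $0$ as $\eta_n\to0$. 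Because $(\mu,s)\mapsto\mu^{-s}$ is uniformly continuous on the compact set $[\lambda_{2},\omega+1]\times K$, for all large $n$ every term of the bulk sum has modulus less than $\varepsilon/(3J)$, so the bulk contributes at most $\varepsilon/3$. Adding the three estimates gives $|\zeta_{\eta_n}(s)-\zeta(s)|<\varepsilon$ for every $s\in K$ once $n$ is large, which is the asserted uniform convergence on $K$; since $K$ was an arbitrary compact subset of $\{u>d/2\}$, the theorem follows.

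The only step that might appear to be an obstacle is the behaviour of the numerous ``high'' eigenvalues $\lambda_j^{(k)}(\eta_n)$ with index $j$ of the order of $N_{n}\to\infty$, about which Theorem 1.3 provides no information; but it is handled for free by the inequality $\lambda_j^{(k)}(\eta_n)\ge\lambda_{j}$, which dominates the tail of $\zeta_{\eta_n}$ by the genuine tail of $\zeta$. Thus the argument is essentially a $3\varepsilon$-splitting of $\zeta_{\eta_n}(s)-\zeta(s)$ into a tail of $\zeta$, a tail of $\zeta_{\eta_n}$, and a finite, eigenvalue-by-eigenvalue comparison governed by Theorem 1.3.
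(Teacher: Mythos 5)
Your proposal is correct and follows essentially the same route as the paper: the same three-part splitting, with the tails of both zeta-sums dominated by the tail of $\zeta$ via the min-max inequality $\lambda_j^{(k)}(\eta_n)\geq\lambda_j$, and the finite bulk handled by the convergence $\lambda_j^{(k)}(\eta_n)\to\lambda_j$ supplied by Theorem 1.3. Your version is merely more explicit about the details (the constant $C_K$ for eigenvalues below $1$, the admissible form of $k$, and the condition $\eta_n<(C_0\omega)^{-1/2}$), which the paper leaves implicit.
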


\begin{proof}
Let $\Omega\subset\{s=u+iv|u>d/2\}$ be a compact set. For a fixed
$\varepsilon>0$ let $m\in \mathbb{N}$ a such integer that
$$
\sum_{j\geq m}|\lambda_{j}^{-s}|=
\sum_{j\geq m}\lambda_{j}^{-Re s}\leq \varepsilon/3
$$
for all $s\in \Omega.$

Since for every $k$, we have that $ \lambda_{j}^{(k)}(\eta_{n})
\geq\lambda_{j},$
$$
|(\lambda_{j}^{(k)}(\eta_{n})^{-s}|=(\lambda_{j}^{(k)}(\eta_{n}))^{-Re s}
\leq \lambda_{j}^{-Re s}
=|\lambda_{j}^{-s}|.
$$
Thus
\begin{equation}
\mid\sum_{j\geq m}\lambda_{j}^{-s}-\sum_{j=m}^{N(n)}(\lambda_{j}^{(k)}
(\eta_{n}))^{-s}
\mid\leq \frac{2\varepsilon}{3}.
\end{equation}
Next, because $\lambda_{j}^{(k)}(\eta_{n}) $ goes to $\lambda_{j}$ as
$\eta_{n}$ goes to zero, we can find $n=n(\varepsilon)$ such that for
 $n>n(\varepsilon), s\in \Omega$
\begin{equation}
\mid\sum_{j\leq m}\lambda_{j}^{-s}
-\sum_{j\leq m}^{N(n)}(\lambda_{j}^{(k)}
(\eta_{n}))^{-s}
\mid\leq \varepsilon/3,
\end{equation}
where we assume that $\lambda_{j}\neq 0, \lambda_{j}^{(k)}(\eta_{n})\neq
0.$

The last inequalities imply
$$
|\zeta(s)-\zeta_{\eta_{n}}(s)|\leq\varepsilon
$$
if $n>n(\varepsilon), s\in \Omega.$ Theorem is proved.

\end{proof}

\section{A Poincare type inequality and spline approximation on manifolds}

We consider a compact orientable Riemannian manifold whose Ricci
curvature satisfies (1.1). First, we prove the Covering Lemma 1.1
from Introduction (compare to [1]).
\begin{proof}

Let us choose a family of disjoint balls $B(x_{i},\rho/4)$ such
that there is no ball $B(x,\rho/4), x\in M,$ which has empty intersections
with all balls from our family. Then the family $B(x_{i},\rho/2)$ is a
cover of
$M$. Every ball from the family $\{B(x_{i}, \rho)\}$, that has
non-empty intersection with a particular ball $\{B(x_{j}, \rho)\}$ is
contained in the ball $\{B(x_{j}, 3\rho)\}$. Since any two balls from the
family $B(x_{i},\rho/4)$
are disjoint, it gives the following estimate for the index of
multiplicity
$R$ of the cover $B(x_{i},\rho)$:
\begin{equation}
R\leq\frac{\sup_{y\in M}|B(y,3\rho)|}{\inf_{x\in M}|B(x,\rho/4)|}.
\end{equation}

As it was mentioned in the Introduction, the Bishop-Gromov comparison
 theorem (see [8]) implies that for any $0< \sigma<\lambda<r/2$
\begin{equation}
|B(x,\lambda)|\leq (\lambda/\sigma)^{d} e^{(kr(d-1))^{1/2}}|B(x,\sigma)|.
\end{equation}

This property along with (4.1) allows to continue the
estimation of $R$:

$$R\leq 12^{d}e^{(kr(d-1))^{1/2}}
\frac{\sup_{y\in M}|B(y,\rho/4)|}{\inf_{x\in M}|B(x,\rho/4)|}
\leq 12^{d}b e^{(kr(d-1))^{1/2}}=R_{0}(M).
$$

\end{proof}

We will  need the following result which is in fact a global
 Poincare type inequality.

\begin{lem}
For any $k>d-1$ there exist constants $C(M,k)>0, \rho(M,k)>0$
 such that for any $\rho<\rho(M,k)$ and any $\rho$-admissible
set $M_{\rho}=\{x_{i}\}$ the following inequality holds true

\begin{equation}
\|f\|\leq C(M,k)\left\{\rho^{d/2}\left(\sum_{x_{i}\in M_{\rho}}
|f(x_{i})|^{2}\right)^{1/2}+\rho^{2k}\|\Delta^{k}f\|\right\}, k>d-1.
\end{equation}

\end{lem}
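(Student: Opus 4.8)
The plan is to reduce the global estimate to a sum of local estimates on the balls of a fixed finite cover, and on each such ball invoke a standard (Euclidean-type) Poincaré-Sobolev inequality for functions that need not vanish but whose "sampling" on the $\rho$-admissible set controls their average. First I would fix a finite cover of $M$ by balls $B(y_\nu,\sigma)$ with an associated partition of unity $\{\varphi_\nu\}$, exactly as in the definition of the norm (1.1), so that $\|f\|^2 \asymp \sum_\nu \|\varphi_\nu f\|_{H^0}^2$ (here we only need the $L_2$ pieces, but the $H^{2k}$ regularity of $f$ is available). For $\rho$ small enough relative to $\sigma$, each cover ball $B(y_\nu,\sigma)$ contains at least one point $x_i\in M_\rho$ (by property 2 of admissibility, the balls $B(x_i,\rho/2)$ cover $M$), and by the multiplicity bound (property 3) together with the volume-doubling estimate (1.3) and Bishop–Gromov, the number of points of $M_\rho$ lying in $B(y_\nu,2\sigma)$ is comparable to $\rho^{-d}$ uniformly in $\nu$. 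These are the ingredients that let the discrete $\ell^2$-sum over $M_\rho$ play the role of an integral average.

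The analytic core is the following local inequality: on a ball $B=B(y,\sigma)$ (with geometry controlled by the curvature bounds, so that it is bi-Lipschitz to a Euclidean ball with uniform constants), for any $u\in H^{2k}(B)$ and any point $x^*\in B$,
\[
\|u\|_{L_2(B)} \le C\Big( \sigma^{d/2}\,|u(x^*)| + \sigma^{2k}\,\|\Delta^k u\|_{L_2(B)} + (\text{lower-order interior terms})\Big),
\]
which one gets from: (i) writing $u - u(x^*)$ and estimating it by derivatives of $u$ up to order $2k$ via Sobolev embedding $H^{2k}\hookrightarrow C^0$ (legitimate since $k>d-1>d/2$); (ii) controlling all intermediate derivative norms $\|\Delta^j u\|$, $0\le j\le k$, by interpolation between $\|u\|$ and $\|\Delta^k u\|$; (iii) tracking the scaling in $\sigma$. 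The scaling is what produces the factors $\rho^{d/2}$ and $\rho^{2k}$: after summing over a localized cover at scale $\rho$ (rather than the fixed scale $\sigma$), the point-evaluation term scales like $\rho^{d/2}$ times the $\ell^2$-norm of the samples, and the top-order term acquires $\rho^{2k}$. One then sums the local inequalities over $\nu$, uses the uniform-multiplicity bound to control overlaps, and uses that each local sample point is one of the $x_i$, to obtain (4.3) with $C(M,k)$ depending on the cover geometry, the curvature bounds, and $k$.

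I expect the main obstacle to be steps (ii)–(iii): making the interpolation between $\|u\|$ and $\|\Delta^k u\|$ \emph{quantitative in the scale parameter} so that only the two extreme terms survive with the stated powers of $\rho$, and absorbing the intermediate terms $\rho^{2j}\|\Delta^j u\|$ for $0<j<k$ into the left-hand side plus a controlled multiple of the right-hand side (a Young-type absorption argument, valid once $\rho<\rho(M,k)$). A secondary technical point is the passage between the Laplace–Beltrami operator $\Delta$ and ordinary derivatives on each coordinate ball; here one uses the regularity theorem quoted after (1.1), which asserts $\|f\|+\|\Delta^{s/2}f\|$ is equivalent to the Sobolev norm, to convert $\|\Delta^j u\|$ estimates into genuine derivative estimates and back, with constants depending only on $M$ (hence absorbed into $C(M,k)$). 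Everything else — the covering combinatorics, the volume comparisons, the Sobolev embedding — is routine given Lemma 1.1 and the stated curvature hypotheses.
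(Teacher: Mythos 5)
Your skeleton (pointwise evaluation on small balls, summation using the multiplicity bound of Lemma 1.1, then interpolation to kill the intermediate derivatives) is the same as the paper's, but the two steps you defer are exactly where the lemma turns, and as sketched they would fail. First, the localization scale: your ``analytic core'' is a local inequality on a ball of the \emph{fixed} radius $\sigma$ with a single sample point $x^*$. No such inequality with only absorbable error terms can hold: take $u$ nonconstant with $\Delta^k u=0$ on $B(y,\sigma)$ and $u(x^*)=0$ (e.g.\ locally a linear harmonic function); then $|u(x^*)|$ and $\|\Delta^k u\|_{L_2(B)}$ both vanish while $\|u\|_{L_2(B)}$ does not, so the ``lower-order interior terms'' carry all the information and are not lower order. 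The correct localization (and the paper's) is at scale $\rho$ around the sample points themselves: by admissibility the balls $B(x_i,\rho/2)$ cover $M$ and the balls $B(x_i,\rho)$ have multiplicity $\le R_0(M)$; a Taylor expansion of $\varphi_\nu f$ at $x_i$ combined with the scaled pointwise Sobolev bound $|\partial^{\alpha}(\varphi_\nu f)(x_i)|\le C\sum_{|\mu|\le m}\rho^{|\mu+\alpha|-d/2}\|\partial^{\mu+\alpha}(\varphi_\nu f)\|_{L_2(B(x_i,\rho))}$ produces, per ball, the term $\rho^{d}|f(x_i)|^2$ plus terms $\rho^{2|\alpha|}\|\partial^{\alpha}(\varphi_\nu f)\|_{L_2(B(x_i,\rho))}^2$ with $1\le|\alpha|\le k$. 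The factor $\rho^{d/2}$ comes from $|B(x_i,\rho)|\asymp\rho^d$ in this pointwise estimate, with one sample per $\rho$-ball; your count of $\asymp\rho^{-d}$ points per $\sigma$-ball plays no role.

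Second, the absorption of the intermediate terms, which you flag as the main obstacle, cannot be carried out ball by ball: on a ball of radius $\rho$ the parameter in the additive interpolation inequality is limited by the size of the ball, so the coefficient in front of the local $\|u\|_{L_2(B(x_i,\rho))}$ cannot be made smaller than a fixed constant (the counterexample above is this obstruction in disguise). The paper's resolution is to sum over all balls \emph{first}, obtaining $\|f\|^2\le C(M,k)\{\rho^{d}\sum_i|f(x_i)|^2+\sum_{j=1}^{k}\rho^{2j}\|f\|^2_{H^{j}(M)}\}$, then to use the regularity (graph-norm) theorem to replace $\|f\|_{H^{j}}$ by $\|f\|+\|\Delta^{j/2}f\|$, and finally the \emph{global} interpolation inequality for the self-adjoint $\Delta$ on the closed manifold, $\rho^{j}\|\Delta^{j/2}f\|\le a^{2k-j}\rho^{2k}\|\Delta^{k}f\|+c_k a^{-j}\|f\|$, whose parameter $a>0$ is unconstrained (spectral theorem). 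Choosing $a$ large makes the coefficient of $\|f\|$ small enough to absorb into the left-hand side, while the smallness condition $\rho<\rho(M,k)$ is needed only for the remaining $\rho^{j}\|f\|$ terms. Without this ``globalize, then interpolate with a free parameter'' order of operations, your Young-type absorption has no valid implementation, so the proposal has a genuine gap at its decisive step.
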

\begin{proof}

Let $M_{\rho}=\{x_{i}\}$ be a $\rho$-admissible set
and $\{\varphi_{\nu}\}$ the partition of unity from (1.1). For any $f\in
C^{\infty}(M)$, every fixed $B(x_{i},\rho)$ and every
 $x\in B(x_{i},\rho/2)$
$$
(\varphi_{\nu}f)(x)=(\varphi_{\nu}f)(x_{i})+\sum_{1\leq|\alpha|\leq n-1}
\frac{1}{\alpha !}\partial^{|\alpha|}(\varphi_{\nu}f)(x_{i})(x-x_{i})
^{\alpha}+
$$
\begin{equation}
\sum_{|\alpha|=n}\frac{1}{(n-1)!}\int_{0}^{\tau}t^{n-1}\partial
^{|\alpha|}(\varphi_{\nu}f)(x_{i}+t\vartheta)\vartheta^{\alpha}dt,
\end{equation}
where $x=(x_{1},...,x_{d}), x_{i}=(x_{1}^{i},...,x_{d}^{i}), \alpha=(
\alpha_{1},...,\alpha_{d}), x-x_{i}=(x_{1}-x_{1}^{i})^{\alpha_{1}}...
(x_{d}-x_{d}^{i})^{\alpha_{d}}, \tau=\|x-x_{i}\|,
\vartheta=(x-x_{i})/ \tau.$

We are going to make use of the following inequality.
\begin{equation}
|\partial^{|\alpha|}(\varphi_{\nu}f)(x_{i})|\leq C_{d,m}\sum_{|\mu|
\leq m}
\rho^{|\mu+\alpha|-d/2}\|\partial^{|\mu+\alpha|}(\varphi_{\nu}f)\|
_{L_{2}(B(x_{i},\rho))},
\end{equation}
where $\mu=(\mu_{1}, \mu_{2}, ... ,\mu_{d}), m>d/2.$
To prove this inequality we first recall the following inequality
(see[5]):

 there exists a constant $c_{d,m}$ such that for every
$\psi\in C_{0}^{\infty}(B(x_{i},\rho/2))$
$$
|\psi(x_{i})|\leq
c_{d,m}\rho^{m-d/2}\|\psi\|_{H^{m}(B(x_{i},\rho))},m>d/2.
$$

We consider the function
$\xi(x)=e \exp(1/(\|x\|^{2}-1))$, if $\|x\|<1$ and
 $\xi(x)=0,$ if $\|x\|\geq 1.$ It is clear that
$\xi\in C_{0}^{\infty}(U_{0})\subset \mathbb{R}^{d}, \xi(0)=1,$ where
$U_{0}$ is the unit ball of $ \mathbb{R}^{d}.$ Set $\xi_{\rho}(x)=
\xi(2\rho^{-1}(x-x_{i})).$ Since for any
$\psi\in C^{\infty}(B(x_{i},\rho))$
we have that $\xi_{\rho}\psi\in C^{\infty}_{0}(B(x_{i},\rho/2))$ and
$\xi_{\rho}\psi(x_{i})=\psi(x_{i}), $ we can use the last inequality
to obtain
$$
|\psi(x_{i})|\leq c_{d,m}\rho^{m-d/2}\|\xi_{\rho}\psi\|_{H^{m}
(B(x_{i},\rho))}, m>d/2.
$$
This inequality shows  that there exist constants $C(d,m,n)$ such
that for any $\psi\in C^{\infty}(B(x_{i},\rho))$
$$
|\psi(x_{i})|\leq \sum_{   n\leq m    } C(d,m,n)\rho^{n-d/2}\|\psi\|
_{H^{n}(B(x_{i},\rho))}, m>d/2.
$$

This inequality implies the inequality (4.5) when
$\psi=\partial^{|\alpha|}
(\varphi_{\nu}f)$.

Now we continue the estimation of the second term in (4.4).
$$
\int_{B(x_{i},\rho/2)}|\sum_{1\leq|\alpha|\leq n-1}
\frac{1}{\alpha !}\partial^{|\alpha|}(\varphi_{\nu}f)(x_{i})(x-x_{i})
^{\alpha}|^{2}dx\leq
$$
$$
\Omega_{d}
\sum_{1\leq|\alpha|\leq n-1}(1/2)^{2|\alpha|+d}
|\partial^{|\alpha|}(\varphi_{\nu}f)(x_{i})|^{2}\rho^{2|\alpha|+d}\leq
$$
$$
C_{d,n}\sum_{|\gamma|\leq n+m-1}   \rho^{2|\gamma|}
\|\partial^{|\gamma|}(\varphi_{\nu}f)\|^{2}_{L_{2}(B(x_{i},\rho))}.
$$

Next, to estimate the third term in (4.4)
 we use the Schwartz inequality and the assumption $n>d/2$
$$
|\int_{0}^{\tau}t^{n-1}\partial
^{|\alpha|}(\varphi_{\nu}f)(x_{i}+t\vartheta)\vartheta^{\alpha}dt|^{2}
\leq
$$
$$
\left(\int_{0}^{\tau}t^{n-d/2-1/2}|t^{d/2-1/2}\partial
^{|\alpha|}(\varphi_{\nu}f)(x_{i}+t\vartheta)|dt\right)^{2}\leq
$$
$$
C_{d,n}\tau^{2n-d}\int_{0}^{\tau}t^{d-1}|\partial
^{|\alpha|}(\varphi_{\nu}f)(x_{i}+t\vartheta)|^{2}dt.
$$

We integrate both sides of this inequality over the ball
$B(x_{i},\rho/2)$ using the spherical coordinate system
$(\tau,\vartheta).$

$$
\int_{0}^{\rho/2}\tau^{d-1}\int_{0}^{2\pi}
|\int_{0}^{\tau}t^{n-1}\partial
^{|\alpha|}(\varphi_{\nu}f)(x_{i}+t\vartheta)\vartheta^{\alpha}dt|^{2}d\vartheta
d\tau\leq
$$
$$C_{d,n}\int_{0}^{\rho/2}t^{d-1}\left(\int_{0}^{2\pi}\int_{0}^{\rho/2}
\tau^{2n-d}|\partial^{|\alpha|}(\varphi_{\nu}f)(x_{i}+t\vartheta)|^{2}
\tau^{d-1}d\tau
d\vartheta\right)dt\leq
$$
$$
C_{d,n}\rho^{2n}\|\partial^{|\alpha|}
(\varphi_{\nu}f)\|^{2}_{L_{2}(B(x_{i},\rho))},
$$
where $\tau=\|x-x_{i}\|\leq\rho/2, |\alpha|=n.$

Finally, if $n>d/2$ and $k=n+m-1$,

$$\|\varphi_{\nu}f\|^{2}_{L_{2}(B(x_{i},\rho/2))}\leq
C_{d,k}\left(\rho^{d}|f(x_{i}|^{2}+
\sum_{j=1}^{k}\sum_{1\leq|\alpha|\leq j}\rho^{2|\alpha|}
\|\partial^{|\alpha|}
(\varphi_{\nu}f)\|^{2}_{L_{2}(B(x_{i},\rho))}\right),
$$
where $k>d-1$ since $n>d/2$ and $m>d/2.$
Since balls $B(x_{i},\rho/2)$ cover the manifold and the cover by
$B(x_{i},\rho)$
has a finite multiplicity $\leq R_{0}(M)$ the
summation over all balls gives
$$
\|f\|^{2}_{L_{2}(M)}\leq C(M,k)\left\{\rho^{d}\left(\sum_{i=1}^{\infty}
|f(x_{i})|^{2}\right)+\sum_{j=1}^{k}\rho^{2j}\|f\|^{2}_{H^{j}(M)}
\right\}, k>d-1.
$$
Using this inequality and the regularity theorem for Laplace-Beltrami
operator (see [9]) we obtain
$$
\|f\|_{L_{2}(M)}\leq$$
$$
C(M,k)\left\{\rho^{d/2}\left(\sum_{i=1}^{\infty}
|f(x_{i})|^{2}\right)^{1/2}+\sum_{j=1}^{k}\rho^{j}\left(\|f\|+
\|\Delta^{j/2}f\|\right)
\right\}, k>d-1.
$$

For the self-adjoint $\Delta$ for any $a>0,\rho>0, 0\leq j\leq k$ we have
the following interpolation inequality
$$
\rho^{j}\|\Delta^{j/2}f\|\leq a^{2k-j}\rho^{2k}\|\Delta^{k}f\|
+c_{k}a^{-j}\|f\|.
$$

Because in the last inequality we are free to choose any $a>0$ we are
coming to
our main claim.
\end{proof}

The next goal is to extend the last estimate to the Sobolev norm.

\begin{thm}
For any $k>d-1$
there exist constants $C(M,k)>0,\rho(M,k)>0,$ such that for any
$0<\rho<\rho(M,k)$, any admissible set $M_{\rho}=\{x_{i}\}$,
 any $m=2^{l}, l=0, 1, ...$, any smooth $f$ which is zero on
$M_{\rho}$ and any $t\geq 0$

\begin{equation}
\|\Delta^{t}f\|\leq \left(C(M,k)\rho^{2k}\right)^{m}\|\Delta^{km+t}
f\|, t\geq 0.
\end{equation}

\end{thm}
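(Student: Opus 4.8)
The plan is to obtain (4.6) in two moves: first settle the base case $m=1$, $t=0$ directly from the global Poincar\'e inequality of Lemma 4.1, and then bootstrap to arbitrary $t\ge0$ and arbitrary $m=2^{l}$ by invoking the ``exponentiation'' Lemma 1.6, whose iterative doubling structure is precisely what forces $m$ to be a power of $2$.

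For the first move, observe that if $f$ is smooth and its restriction to the $\rho$-admissible set $M_{\rho}=\{x_{i}\}$ vanishes, then $\sum_{x_{i}\in M_{\rho}}|f(x_{i})|^{2}=0$, so the first term on the right-hand side of (4.3) disappears and Lemma 4.1 gives
$$
\|f\|\le C(M,k)\,\rho^{2k}\,\|\Delta^{k}f\|
$$
for every $0<\rho<\rho(M,k)$, with the constants $C(M,k)$ and $\rho(M,k)$ taken directly from that lemma. This is exactly (4.6) in the case $m=1$, $t=0$.

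For the second move, I would apply Lemma 1.6 with $s=k$ and $a=C(M,k)\rho^{2k}$. Its hypothesis (1.14) is exactly the displayed inequality just obtained, so its conclusion (1.15) yields
$$
\|\Delta^{t}f\|\le\big(C(M,k)\rho^{2k}\big)^{m}\,\|\Delta^{km+t}f\|
$$
for all $t\ge0$ and all $m=2^{l}$, $l=0,1,\dots$, which is (4.6). Since $f$ is smooth every norm appearing is finite, so the membership hypotheses in Lemma 1.6 hold automatically, and the constant $C(M,k)$ and the threshold $\rho(M,k)$ are inherited unchanged from Lemma 4.1; in particular they depend neither on $m$ nor on $t$, the feature stressed after (1.9)--(1.11).

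Once those two lemmas are available there is essentially no further obstacle for Theorem 4.2 itself; the substance lies in Lemma 4.1 (proved above by a Taylor expansion on each coordinate ball together with the finite-multiplicity property of the cover) and in Lemma 1.6. For completeness, the natural proof of Lemma 1.6 is spectral: writing $f=\sum c_{i}\varphi_{i}$, the hypothesis reads $\sum(1-a^{2}\lambda_{i}^{2s})|c_{i}|^{2}\le0$; if $\lambda_{0}$ denotes the value with $a^{2}\lambda_{0}^{2s}=1$, then using $\lambda_{i}^{2t}\le\lambda_{0}^{2t}$ where $\lambda_{i}\le\lambda_{0}$ (so $1-a^{2}\lambda_{i}^{2s}\ge0$) and $\lambda_{i}^{2t}\ge\lambda_{0}^{2t}$ where $\lambda_{i}\ge\lambda_{0}$ (so $1-a^{2}\lambda_{i}^{2s}\le0$) gives $\sum\lambda_{i}^{2t}(1-a^{2}\lambda_{i}^{2s})|c_{i}|^{2}\le\lambda_{0}^{2t}\sum(1-a^{2}\lambda_{i}^{2s})|c_{i}|^{2}\le0$, that is $\|\Delta^{t}f\|\le a\|\Delta^{s+t}f\|$ for every $t\ge0$; iterating this with $s$ replaced successively by $2^{j}s$ doubles the exponent at each stage and produces the bound for $m=2^{l}$.
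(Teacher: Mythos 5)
Your proposal is correct and follows the paper's own route: the point-value term in Lemma 4.1 vanishes because $f=0$ on $M_{\rho}$, giving $\|f\|\le C(M,k)\rho^{2k}\|\Delta^{k}f\|$, and then the exponentiation Lemma 1.6 (Lemma 4.3 in the paper) is applied with $s=k$, $a=C(M,k)\rho^{2k}$ to reach (4.6). Your spectral splitting at $a^{2}\lambda_{0}^{2s}=1$ with the $\lambda_{0}^{2t}$ weighting, followed by doubling, is essentially the same Plancherel argument the paper uses to prove that lemma, so there is nothing to add.
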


We will obtain this estimate as a consequence of the following Lemma.

\begin{lem}
1) If for some $f\in H^{2s}(M), a,s>0,$
\begin{equation}
\|f\|\leq a\|\Delta^{s}f\|,
\end{equation}
then for the same $f, a, s$ and all $t\geq 0, m=2^{l}, l=0, 1, ...,$
\begin{equation}
\|\Delta^{t}f\|\leq a^{m}\|\Delta^{ms+t}f\|,
\end{equation}
if $f\in H^{2(ms+t)}(M).$

\end{lem}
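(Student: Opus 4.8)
The plan is to induct on $l$, where $m = 2^l$. The base case $l=0$, i.e. $m=1$, is immediate: applying the self-adjoint, positive-definite operator $\Delta^t$ to both sides of the hypothesis $\|f\|\le a\|\Delta^s f\|$ and using that $\Delta^t$ commutes with $\Delta^s$ is not quite the right move, since $\|\Delta^t f\|\le a\|\Delta^{s+t}f\|$ does not follow by merely applying $\Delta^t$ (the operator is unbounded). Instead the correct observation is that the original inequality $\|g\|\le a\|\Delta^s g\|$ holds \emph{for the specific $f$}, and we want to deduce the analogous statement for $g=\Delta^t f$. The key is that inequalities of the form $\|h\|\le a\|\Delta^s h\|$ are equivalent, via the spectral resolution $\Delta=\int \mu\, dE_\mu$, to a statement about the spectral measure of $h$; but here we cannot assume it holds for all $h$. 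So the honest route is: use the spectral theorem to write $\|f\|^2=\int d\langle E_\mu f,f\rangle$ and $\|\Delta^s f\|^2=\int \mu^{2s}\,d\langle E_\mu f,f\rangle$, so that the hypothesis reads $\int (1 - a^2\mu^{2s})\,d\langle E_\mu f,f\rangle \le 0$.

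First I would establish the doubling step: from $\|f\|\le a\|\Delta^s f\|$ derive $\|\Delta^s f\|\le a\|\Delta^{2s}f\|$, which upon iterating gives the $m=2$ case $\|f\|\le a^2\|\Delta^{2s}f\|$ and then all powers of two. To get $\|\Delta^s f\|\le a\|\Delta^{2s}f\|$, apply the Cauchy--Schwarz inequality in the Hilbert space to the pairing $\langle \Delta^s f,\Delta^s f\rangle = \langle f,\Delta^{2s}f\rangle \le \|f\|\,\|\Delta^{2s}f\| \le a\|\Delta^s f\|\,\|\Delta^{2s}f\|$, using the hypothesis in the last step; dividing by $\|\Delta^s f\|$ (the case $\Delta^s f=0$ being trivial) yields exactly $\|\Delta^s f\|\le a\|\Delta^{2s}f\|$. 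Now a clean induction: assuming $\|f\|\le a^{m}\|\Delta^{ms}f\|$ for $m=2^l$, apply the same Cauchy--Schwarz trick to $\langle \Delta^{ms}f,\Delta^{ms}f\rangle=\langle f,\Delta^{2ms}f\rangle\le \|f\|\,\|\Delta^{2ms}f\|\le a^m\|\Delta^{ms}f\|\,\|\Delta^{2ms}f\|$, giving $\|\Delta^{ms}f\|\le a^m\|\Delta^{2ms}f\|$, and combining with the inductive hypothesis produces $\|f\|\le a^{2m}\|\Delta^{2ms}f\|$, which is the $2^{l+1}$ case.

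Finally I would insert the extra factor $\Delta^t$. The same Bernstein--Cauchy--Schwarz device handles it: for any $t\ge 0$, $\langle \Delta^t f,\Delta^t f\rangle = \langle f,\Delta^{2t}f\rangle$, but this is not directly what is wanted. The more efficient approach is to interpolate: once we have $\|f\|\le a^m\|\Delta^{ms}f\|$, the quantity $\|\Delta^t f\|$ satisfies, by Cauchy--Schwarz/log-convexity of $t\mapsto \log\|\Delta^t f\|$, the bound $\|\Delta^t f\|^2 = \langle \Delta^{t}f,\Delta^{t}f\rangle$... more cleanly: $\|\Delta^t f\| \le \|f\|^{1-\theta}\|\Delta^{ms}f\|^{\theta}$ with $\theta = t/(ms)$ when $t\le ms$, and then substitute $\|f\|\le a^m\|\Delta^{ms}f\|$ to get $\|\Delta^t f\|\le a^{m(1-\theta)}\|\Delta^{ms}f\|$; but we want $\|\Delta^{ms+t}f\|$ on the right, not $\|\Delta^{ms}f\|$, so instead one applies the whole chain to $g=\Delta^t f$ directly. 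The point is that $g$ still satisfies $\|g\|\le a\|\Delta^s g\|$: indeed $\|\Delta^t f\|^2=\langle f,\Delta^{2t}f\rangle$ and one checks via the spectral measure that $\int(1-a^2\mu^{2s})\mu^{2t}\,d\langle E_\mu f,f\rangle\le \sup_\mu[\text{sign considerations}]$---this is the one genuinely delicate point, and it is false in general for a single vector. Therefore the right final step is: run the doubling induction above with $f$ replaced throughout by $\Delta^t f$ only \emph{after} noting that the very first inequality $\|\Delta^t f\|\le a\|\Delta^{s+t}f\|$ must itself be derived, and it is derived by the Cauchy--Schwarz identity $\langle\Delta^t f,\Delta^t f\rangle=\langle\Delta^{t}f, \Delta^t f\rangle\le\|f\|\,\|\Delta^{2t}f\|$ combined with $\ldots$; honestly, the cleanest fix is to prove first, for all $\tau\ge 0$, that $\|\Delta^\tau f\|\le a\|\Delta^{s+\tau}f\|$ by the identical Cauchy--Schwarz argument applied to $\langle\Delta^\tau f,\Delta^\tau f\rangle\le\|\Delta^{\tau-\text{(something)}}f\|\cdots$; I expect \textbf{this insertion of the spectral parameter $t$} to be the main obstacle, and the resolution is to observe that log-convexity of $\tau\mapsto\log\|\Delta^\tau f\|$ together with the single inequality at $\tau=0$ forces the inequality $\|\Delta^\tau f\|\le a\|\Delta^{s+\tau}f\|$ for \emph{all} $\tau\ge0$, after which the doubling induction above applies verbatim to $\Delta^t f$ and yields \eqref{...} with exponent $m$.
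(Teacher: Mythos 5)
Your final plan is correct, but it runs along a different track than the paper. The paper works entirely on the Fourier side: writing $c_j=\langle f,\varphi_j\rangle$, it splits the spectrum at the threshold $\lambda=a^{-1/s}$, notes that the hypothesis says the ``mass defect'' on $\{\lambda_j\le a^{-1/s}\}$ is dominated by the excess on $\{\lambda_j>a^{-1/s}\}$, and then multiplies that inequality by the weights $a^{2}\lambda_j^{2s}$ (which are $\le 1$ on the low part and $\ge1$ on the high part) to get $\|\Delta^{s}f\|\le a\|\Delta^{2s}f\|$, iterates to reach $m=2^l$, and repeats the same weight trick with $a^{2\tau}\lambda_j^{2\tau s}$ to insert the factor $\Delta^{t}$. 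You instead get the doubling step from self-adjointness and Cauchy--Schwarz, $\|\Delta^{ms}f\|^{2}=\langle f,\Delta^{2ms}f\rangle\le\|f\|\,\|\Delta^{2ms}f\|\le a^{m}\|\Delta^{ms}f\|\,\|\Delta^{2ms}f\|$, and you insert $t$ by the convexity of $\tau\mapsto\log\|\Delta^{\tau}f\|$ (non-decreasing increments over intervals of length $s$ give $\|\Delta^{\tau}f\|\le a\|\Delta^{s+\tau}f\|$ for all $\tau\ge0$, after which your doubling applies to $\Delta^{t}f$). Both steps are valid; your version avoids choosing a spectral threshold, needs no discreteness of the spectrum, and is arguably cleaner, while the paper's single splitting trick handles both the doubling and the $t$-insertion uniformly and stays at the level of elementary manipulations of Fourier coefficients. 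One blemish to fix in your writeup: the aside that the shifted inequality ``is false in general for a single vector'' is wrong --- it is true for the given $f$, exactly because of the log-convexity (or, equivalently, the paper's sign-splitting) argument you invoke two lines later --- so the meandering middle portion (the abandoned interpolation with $\theta=t/(ms)$, the half-finished spectral-measure estimate) should simply be deleted; also state explicitly that log-convexity follows from $\langle\Delta^{\tau}f,\Delta^{\tau}f\rangle=\langle\Delta^{\tau-h}f,\Delta^{\tau+h}f\rangle$ and Cauchy--Schwarz, and dispose of the trivial case $\Delta^{\tau}f=0$.
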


\begin{proof}
Let us remind that $\{\lambda_{j}\}$ is the set of eigen values of the
operator $\Delta$ and $\{\varphi_{j}\}$ is the set of corresponding
orthonormal eigen functions. Let $\{c_{j}=<f,\varphi_{j}>\}$ be the set of
Fourier
 coefficients of the function
$f$ with respect to the orthonormal basis $\{\varphi_{j}\}.$ Using the
Plancherel Theorem we can write our assumption (4.7) in the form

$$
\|f\|^{2}\leq a^{2}\left(\sum_{\lambda_{j}\leq a^{-1/s}}
\lambda_{j}^{2s}|c_{j}
|^{2}+\sum_{\lambda_{j}> a^{-1/s}}\lambda_{j}^{2s}|c_{j}
|^{2}\right).
$$
Since for the first sum $a^{2}\lambda_{j}^{2s}\leq 1$,

$$0\leq\sum_{\lambda_{j}\leq a^{-1/s}}(|c_{j}|^{2}-a^{2}\lambda_{j}^{2s}
|c_{j}|^{2})\leq \sum_{\lambda_{j}> a^{-1/s}}(a^{2}\lambda_{j}^{2s}
|c_{j}|^{2}-|c_{j}|^{2}).
$$

Multiplication of  this inequality by $a^{2}\lambda_{j}^{2s}$
 will only
 improve the existing inequality
and then using the Plancherel Theorem once again we will obtain

$$\|f\|\leq a\|\Delta^{s}f\|\leq a^{2}\|\Delta^{2s}f\|.
$$

It is now clear that using induction we can prove

$$
\|f\|\leq a^{m}\|\Delta^{ms}f\|, m=2^{l}, l\in \mathbb{N}.
$$

But then, using the same arguments we have for any $\tau>0$

$$
0\leq
\sum_{\lambda_{j}\leq a^{-1/s}}(a^{2\tau}\lambda_{j}^{2\tau s}
|c_{j}|^{2}
-a^{2(m+\tau)}\lambda_{j}^{2(m+\tau)s}|c_{j}|^{2})\leq
$$
$$\sum_{\lambda_{j}> a^{-1/s}}(a^{2(m+\tau)}\lambda_{j}^{2(m+\tau)s}
|c_{j}|^{2}-a^{2\tau}\lambda_{j}^{2\tau s}|c_{j}|^{2}),
$$
that gives the desired inequality (4.8) if $t=s\tau.$
\end{proof}

To prove (4.6) from the Theorem 4.2 it is enough to apply the last
Lemma 4.3 to the Lemma 4.1
 with $ a=C(M,k)\rho^{2k}$.

Next we are going to construct polyharmonic splines on manifolds.
We will need the following Lemma that gives an equivalent norm on
Sobolev spaces.
Recall that the norm in the Sobolev space were introduced in the
Introduction.

\begin{lem}
For any $k>d-1$ and any $\rho$-admissible set $M_{\rho}=\{x_{i}\},$
 the norm
of the Sobolev space $H^{2k}(M)$ is equivalent to the norm
\begin{equation}
\|\Delta^{k}f\|+\left (\sum_{x_{\gamma}\in
M_{\rho}}|f(x_{\gamma})|^{2}\right)^{1/2}.
\end{equation}
\end{lem}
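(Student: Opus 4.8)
The plan is to prove the equivalence of the norm $\|\Delta^k f\| + \left(\sum_{x_\gamma \in M_\rho} |f(x_\gamma)|^2\right)^{1/2}$ with the standard $H^{2k}(M)$ norm by establishing the two one-sided inequalities. The easy direction is that the expression (4.9) is dominated by a constant times $\|f\|_{H^{2k}(M)}$: indeed, $\|\Delta^k f\| \le \|f\|_{H^{2k}(M)}$ by the regularity theorem cited in the Introduction (the graph norm $\|f\|+\|\Delta^k f\|$ is equivalent to $\|f\|_{H^{2k}(M)}$), and since $2k > d$ the Sobolev embedding $H^{2k}(M) \hookrightarrow C(M)$ gives $\sup_{x\in M}|f(x)| \le C\|f\|_{H^{2k}(M)}$; summing over the $N = |M_\rho|$ points contributes only a factor depending on $N$ (hence on $M$, $k$, $\rho$), so $\left(\sum |f(x_\gamma)|^2\right)^{1/2} \le C(M,k,\rho)\|f\|_{H^{2k}(M)}$.

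For the reverse inequality — the substantial direction — I would invoke Lemma 4.1 with this fixed admissible set $M_\rho$, applied not to $f$ itself but to each $\varphi_\nu f$ or, more directly, iterated on scales. Actually the cleanest route is: Lemma 4.1 already gives, for $\rho < \rho(M,k)$,
\[
\|f\| \le C(M,k)\left\{\rho^{d/2}\Big(\sum_{x_i \in M_\rho}|f(x_i)|^2\Big)^{1/2} + \rho^{2k}\|\Delta^k f\|\right\}.
\]
Since here $\rho$ is a fixed parameter (we are not letting it tend to $0$), this already controls $\|f\|$ by a constant times the expression (4.9). It then remains to control the full graph norm, i.e. to bound $\|\Delta^k f\|$ — but that term appears directly in (4.9), so combining $\|f\| \le C(\text{expression (4.9)})$ with the trivial bound $\|\Delta^k f\| \le \text{expression (4.9)}$ yields $\|f\| + \|\Delta^k f\| \le C(M,k,\rho)\cdot(\text{expression (4.9)})$, and the regularity theorem converts the left side into $\|f\|_{H^{2k}(M)}$ up to an equivalence constant.

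The main obstacle is a point of care rather than of technique: Lemma 4.1 as stated requires $\rho < \rho(M,k)$, whereas Lemma 4.5 is claimed for \emph{any} $\rho$-admissible set with $k > d-1$, apparently with no smallness restriction on $\rho$. To close this gap one should observe that for $\rho$ in the range $[\rho(M,k), r/6)$ one may still run the Taylor-expansion argument underlying Lemma 4.1 on each ball $B(x_i,\rho/2)$ — the estimates (4.4)–(4.5) and the subsequent integrations hold verbatim for any $\rho < r/6$, only the resulting constant depending on $\rho$ — so that $\|f\| \le C(M,k,\rho)\left\{(\sum|f(x_i)|^2)^{1/2} + \|\Delta^k f\|\right\}$ holds with a constant that is bounded on compact subintervals of $(0,r/6)$; since any single admissible set fixes one such $\rho$, the equivalence in Lemma 4.5 follows. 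One should also note the harmless use of a uniform bound $N(M,k,\rho)$ on the number of points in any $\rho$-admissible set, which follows from the disjointness of the balls $B(x_i,\rho/4)$ together with (1.3) and the volume comparison.
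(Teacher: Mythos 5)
Your main argument is exactly the paper's intended proof: the paper disposes of this lemma in one sentence, citing the Poincar\'e-type estimates of Section 4, the Sobolev embedding theorem and elliptic regularity, and your two one-sided inequalities (easy direction via $H^{2k}(M)\hookrightarrow C(M)$ and the graph-norm equivalence; hard direction via the inequality of Lemma 4.1 with $\rho$ fixed, plus regularity to upgrade $\|f\|+\|\Delta^k f\|$ to $\|f\|_{H^{2k}(M)}$) fill in precisely that sketch. If anything, invoking Lemma 4.1 is the more direct choice, since Theorem 4.2 (which the paper cites) applies only to functions vanishing on $M_{\rho}$ and would first require subtracting a bump-function interpolant. (Incidentally, the lemma in question is Lemma 4.4 in the paper's numbering, not 4.5.)

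The one flawed point is your patch for $\rho\in[\rho(M,k),r/6)$. It is not true that the argument of Lemma 4.1 goes through ``verbatim, only the constant depending on $\rho$'': after summing the local Taylor estimates one has $\|f\|\leq C(M,k)\{\rho^{d/2}(\sum_i|f(x_i)|^2)^{1/2}+\sum_{j=1}^{k}\rho^{j}(\|f\|+\|\Delta^{j/2}f\|)\}$, and the term $C(M,k)\sum_j\rho^{j}\|f\|$ must be absorbed into the left-hand side; this is exactly where the smallness condition $\rho<\rho(M,k)$ enters, and for $\rho$ comparable to $r/6$ the coefficient of $\|f\|$ on the right need not be less than one, so the argument yields nothing. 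If you want the lemma literally for every admissible set (the paper's own proof, resting on Theorem 4.2, carries the same implicit restriction), argue instead by compactness for the fixed set $M_{\rho}$: were the inequality false, a sequence with $\|f_n\|=1$, $\|\Delta^k f_n\|\to0$, $f_n(x_i)\to0$ would be bounded in $H^{2k}(M)$, hence (Rellich, and $H^{2k}(M)\hookrightarrow\hookrightarrow C(M)$ since $2k>d$) have a limit $f$ with $\|f\|=1$, $\Delta^k f=0$ and $f(x_i)=0$; but the kernel of $\Delta^k$ on a connected compact manifold consists of constants, so $f\equiv 0$, a contradiction. This gives the equivalence with a non-effective constant, which is all the lemma asserts; the quantitative constants of Lemma 4.1 are only needed, and only available, for $\rho<\rho(M,k)$.
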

The proof of the Lemma can be obtained as a consequence of the
Theorem 4.2, the Sobolev embedding Theorem and regularity of the
Laplace-Beltrami operator.

Given a $\rho$-admissible set $M_{\rho}, |M_{\rho}|=N,$ and a sequence
 of complex numbers $\{v_{\gamma}\}_{1}^{N}$ we
will be
 interested to find a
 function $s_{k}\in H^{2k}(M), k $ is large enough such that

a) $ s_{k}(x_{\gamma})=v_{\gamma}, x_{\gamma}\in \ M_{\rho};$

b) function $s_{k}$ minimizes functional $u\rightarrow \|\Delta^{k}u\|$.
\begin{lem}
The minimization problem has a unique solution if $k>d-1$.
\end{lem}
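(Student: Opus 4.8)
The plan is to reduce the constrained minimization to a nearest-point problem in the Hilbert space $L_2(M)$. The only real subtlety is that $u\mapsto\|\Delta^k u\|$ is merely a seminorm (its kernel is the constant functions); the interpolation constraint is what will restore uniqueness, and Lemma 4.5 is what makes this quantitative.

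First I would set up the affine constraint set. Since $2k>d$, the Sobolev embedding $H^{2k}(M)\hookrightarrow C(M)$ shows that every point evaluation $f\mapsto f(x_\gamma)$ is a bounded functional on $H^{2k}(M)$; hence
$$
Q=\{u\in H^{2k}(M):u(x_\gamma)=v_\gamma,\ \gamma=1,\dots,N\}
$$
is a closed affine subset of $H^{2k}(M)$. It is nonempty: the points of $M_\rho$ are distinct, so choosing disjoint balls around them and smooth bump functions $\psi_\gamma$ with $\psi_\gamma(x_\mu)=\delta_{\gamma\mu}$, the function $u_0:=\sum_{\gamma}v_\gamma\psi_\gamma$ lies in $C^\infty(M)\subset H^{2k}(M)$ and in $Q$. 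Writing $V=\{u\in H^{2k}(M):u|_{M_\rho}=0\}$ for the (closed) kernel of the evaluation map, we have $Q=u_0+V$.

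Next I would invoke Lemma 4.5. For $v\in V$ the sum $\sum_{x_\gamma\in M_\rho}|v(x_\gamma)|^2$ vanishes, so that lemma gives the equivalence $\|\Delta^k v\|\asymp\|v\|_{H^{2k}(M)}$ on $V$; in particular the bounded operator $\Delta^k\colon V\to L_2(M)$ is bounded below, hence injective with closed range $W:=\Delta^k(V)\subset L_2(M)$. For $u=u_0+v\in Q$ we have $\|\Delta^k u\|=\|\Delta^k u_0+\Delta^k v\|$, so minimizing $\|\Delta^k u\|$ over $Q$ amounts to minimizing $\|\Delta^k u_0+w\|$ over $w\in W$, i.e. to computing $\mathrm{dist}_{L_2}(\Delta^k u_0,W)$. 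Since $W$ is a closed subspace of the Hilbert space $L_2(M)$, the projection theorem yields a unique minimizing element $w^\ast=-P_W(\Delta^k u_0)$; by injectivity of $\Delta^k$ on $V$ there is a unique $v^\ast\in V$ with $\Delta^k v^\ast=w^\ast$, and $s_k:=u_0+v^\ast\in H^{2k}(M)$ then interpolates the data and minimizes the functional. For uniqueness, any minimizer has the form $u_0+\tilde v$ with $\tilde v\in V$; then $\Delta^k\tilde v\in W$ realizes the minimal distance, so $\Delta^k\tilde v=w^\ast$ by uniqueness of the projection, whence $\tilde v=v^\ast$.

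The one step carrying the real content is the coercivity estimate $\|v\|_{H^{2k}(M)}\le C\|\Delta^k v\|$ for $v\in V$; everything after it is the routine orthogonal-projection argument. This is exactly what Lemma 4.5 (equivalently Theorem 4.2 with $m=1$) provides, and it is here that the hypothesis $k>d-1$ enters. It also explains why the interpolation constraint is needed at all: a nonzero constant lies in the kernel of the seminorm $u\mapsto\|\Delta^k u\|$ but not in $V$, since $M_\rho\neq\emptyset$.
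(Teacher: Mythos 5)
Your argument is correct and is essentially the paper's: both proofs reduce the constrained problem to an orthogonal projection in a Hilbert space, with the norm-equivalence lemma (Lemma 4.4 in the paper --- what you cite as ``Lemma 4.5'' is actually the statement being proved) supplying the coercivity $\|v\|_{H^{2k}(M)}\le C\|\Delta^{k}v\|$ on the subspace of functions vanishing on $M_{\rho}$. The only cosmetic difference is where the projection takes place: you project $\Delta^{k}u_{0}$ in $L_{2}(M)$ onto the closed range $\Delta^{k}(V)$ and pull back by injectivity, whereas the paper renorms $H^{2k}(M)$ with the equivalent inner product $\sum_{\gamma}f(x_{\gamma})\overline{g(x_{\gamma})}+\langle\Delta^{k}f,\Delta^{k}g\rangle$ and takes $s_{k}=f-Pf$, where $P$ is the orthogonal projection onto that same subspace.
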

\begin{proof}

According to the last Lemma 4.4 it is enough to minimize the norm
(4.9). For the given sequence ${v_{\gamma} }$ consider a function
$f$ from $H^{2k}(M)$ such that $f(x_{\gamma})=v_{\gamma}.$ We
consider $H^{2k}(M)$ as the Hilbert space with the inner product
$$<f,g>=\sum_{x_{\gamma}\in
M_{\rho}}f(x_{\gamma})g(x_{\gamma})+ <\Delta^{k/2}f,\Delta^{k/2}g>.$$

Let $Pf$
 denote the orthogonal projection of the function $f$
 on the subspace
$U^{2k}(M_{\rho})=\left \{f\in H^{2k}(M)|f(x_{\gamma})=0\right \}$ with
respect to the new scalar product.
Then the function $g=f-Pf$ will be the unique solution to the
above minimization problem for the
 functional $u\rightarrow \|\Delta^{k} u\|, k>d-1.$
\end{proof}

We prove the Lemma 1.2 from the Introduction i.e. a function
$u\in H^{2k}(M)$ is a solution of the variational
problem 1)-2) if and only if it satisfies the following
equation in the sense of distributions
\begin{equation}
\Delta^{2k}u=\sum_{\nu=1}^{N} \alpha_{\nu}\delta(x_{\nu})
\end{equation}
where $\delta(x_{\nu})$ is the Dirac measure at $x_{\nu}$.

Indeed, we already know that for every solution $u$ of the above
variational
problem
\begin{equation}
0=<\Delta^{k}u,\Delta^{k}h>
=\int_{M}\Delta^{k}u\overline{\Delta^{k}h},
\end{equation}
where $h$ is any function which is zero on $M_{\rho}$.

Let $\{\xi_{\gamma}\}$ be the set of $C_{0}^{\infty}(M)$ functions
such that their supports are disjoint and
$\xi_{\nu}(x_{\mu})=\delta_{\mu\nu}$, where $\delta_{\nu\mu}$ is
the Kronecker delta. Then for any $\psi\in C_{0}^{\infty}(M)$ the
function

$$\psi-\sum_{\nu=1}^{N}\psi(x_{\nu})\xi_{\nu}
$$
is zero on $M_{\rho}$ and

$$
0=\int_{M}\Delta^{k}u\overline{
\Delta^{k}(\psi-\sum_{\nu=1}^{N}\psi(x_{\nu})\xi_{\nu})}=
$$
$$\int_{M}\Delta^{2k}u\overline{\psi}-
\sum_{\nu}^{N}<\Delta^{k}u,\Delta^{k}\xi_{\nu}>\overline{\psi(x_{\nu})}.
$$
In other words $\Delta^{2k}u$ is a distribution of the form

$$
\Delta^{2k}u=\sum\alpha_{\nu}\delta(x_{\nu}),
$$
where $\alpha_{\nu}=<\Delta^{k}u,\Delta^{k}\xi_{\nu}>.$

So every solution of the variational problem is a solution of (4.10).

Conversely, if $u$ is a solution of (4.10) then since the Dirac
measure belongs to the space $H_{-\varepsilon-d/2}(M), d=dim M,
\varepsilon>0,$ the Regularity Theorem for elliptic operator
$\Delta^{2k}$ of order $4k$ implies that $u\in H_{2k}(M)$ and for
any $h$ which is zero on $M_{\rho}$ we have

$$
<\Delta^{k}u,\Delta^{k}h>=<\Delta^{2k}u,h>=0,
$$
that shows that $u$ is a the solution for 1)-2).

Lemma 1.2 is proved.

Now  we can prove the
Approximation Theorem 1.4, which plays a key role in the proof of the
Theorem 1.3.

\begin{proof}

Using the Theorem 4.2 with $k=t=d$ and the continuous embedding
$H^{d}(M)\subset H^{s}(M), d\geq s,$ we obtain for every $s\leq d$

$$\|\Delta^{s}(s_{n}(f)-f)\|\leq C(M) \|\Delta^{d}(s_{n}(f)-f)\|\leq
\left(C(M)\rho^{2d}\right)^{m}\|\Delta^{d(m+1)}(s_{n}(f)-f)\|,
$$
where $n=d(m+1), m=2^{l}, l=0, 1, ....$
By minimization property we obtain

$$\|\Delta^{s}(s_{n}(f)-f)\|\leq
\left(C(M)\rho^{2}\right)^{n-d}\|\Delta^{n}f\|, n=d(2^{l}+1).
$$

If $f\in E_{\omega},$ i.e. $f$ is a linear combination of eigen functions
whose eigen
values belong to $[0, \omega]$, then $\|\Delta^{n}f\|\leq\omega^{n}\|f\|$,
and

$$\|\Delta^{s}(s_{n}(f)-f)\|\leq
\omega^{d}
\left(C(M)\rho^{2}\omega\right)^{n-d}\|f\|, n=d(2^{l}+1), s\leq d.
$$
To obtain corresponding estimates in the uniform norm it is enough to
combine the above inequalities with the Sobolev embedding Theorem.
The Approximation Theorem 1.4 is proved.
\end{proof}

\makeatletter \renewcommand{\@biblabel}[1]{\hfill#1.}\makeatother

\end{document}